\documentclass[11pt]{amsart}

\usepackage[T1]{fontenc}
\usepackage[utf8]{inputenc}

\usepackage{amsmath,amssymb,amsfonts,amsthm,mathtools}
\usepackage{mathrsfs}
\usepackage{enumitem}
\usepackage{tikz}
\usepackage[colorlinks=true,
            linkcolor=blue,
            citecolor=blue,
            urlcolor=blue]{hyperref}
\usepackage[nameinlink,noabbrev]{cleveref}
\usepackage{cite}

\numberwithin{equation}{section}

\newtheorem{theorem}{Theorem}[section]
\newtheorem{lemma}[theorem]{Lemma}
\newtheorem{proposition}[theorem]{Proposition}
\newtheorem{corollary}[theorem]{Corollary}

\theoremstyle{definition}
\newtheorem{definition}[theorem]{Definition}

\newtheorem{remark}[theorem]{Remark}

\title{Binary transformation groups and topological fields}

\author{Pavel S. Gevorgyan}
\address{%
\scalebox{0.94}[1]{Department of Mathematical Analysis named after Academician P.S. Novikov}\\
Moscow Pedagogical State University, Moscow, Russia}
\email{pgev@yandex.ru}

\subjclass[2020]{Primary 54H15; Secondary 54H13, 12J10, 22A30}
\keywords{binary group action, distributive binary $G$-space, semitransitive binary action, topological field, multiplicative group, binary transformation group, duality theorem}

\begin{document}

\begin{abstract}
The notion of a semitransitive binary action of a group $G$ on a topological space is introduced. A duality theorem is proved, establishing a bijective correspondence between semitransitive distributive binary $G$-spaces and topological fields whose multiplicative group is isomorphic to $G$. This result yields an equivalence between the category of semitransitive distributive binary $G$-spaces and the category of topological fields with multiplicative group $G$.

As applications of the duality theorem, two important results are established. It is shown that a finite group can act semitransitively, distributively, and binarily only on finite sets whose cardinality is a power of a prime number. A complete characterization of those groups that can appear as multiplicative groups of topological fields is also obtained.
\end{abstract}

\maketitle

%--------------------------------

\section{Introduction}

Representations of groups by unary operations (permutations, bijections, transformations) on sets play an important role in many branches of mathematics and its applications. Similarly, representations of groups by binary operations (binary transformations) on sets occupy an important place in algebra and other areas of mathematics.

The set $C(X)$ of all continuous maps from a topological space $X$ to itself, endowed with the compact-open topology, is a topological monoid under composition of maps, with the identity map as the unit. All invertible elements of this monoid form the homeomorphism group $H(X)$ of the space $X$.

To construct the group of binary transformations of a topological space $X$, one should consider the set $C_2(X)$ of all continuous maps from $X^2$ to $X$ and define a natural group operation on it. There are various ways to define the product (composition) of two binary operations $\varphi$ and $\psi$ belonging to $C_2(X)$. However, the most natural products are those given by the formulas
\[
\varphi \psi (x,y) = \varphi(x, \psi (x,y)), \quad \varphi \psi (x,y) = \varphi(\psi (x,y),y), \quad x,y \in X,
\]
which are called the \textit{right} and \textit{left} multiplication of the operations $\varphi$ and $\psi$, respectively. These multiplications were introduced by Mann \cite{Man}. The set of all binary operations defined on a given set $X$ forms a monoid under left and right multiplication of operations. These two monoids are isomorphic, and the isomorphism is established by the correspondence $\varphi \to \varphi^*$, where $\varphi^*(x, y)=\varphi (y, x)$ \cite{Movsisyan}.

The set $C_2(X)$ of all continuous maps $f:X^2\to X$, endowed with the compact-open topology, is a topological monoid with identity $e(x,y)=y$ under right multiplication of operations \cite{Gev}. Let $H_2(X)$ be the group of all invertible elements of the monoid $C_2(X)$. The elements of the group $H_2(X)$ are called \emph{binary transformations}, and the group $H_2(X)$ itself is called the \emph{group of binary transformations} of the space $X$. The group $H(X)$ embeds (algebraically and topologically) into the group $H_2(X)$ \cite{Gev2}. Thus, the group of binary transformations $H_2(X)$ is a natural extension of the homeomorphism group $H(X)$.

Any topological group that acts effectively on a space $X$ is isomorphic to some topological subgroup of the group $H(X)$ \cite{Br}. Consequently, the homeomorphism group $H(X)$ is a universal object for topological groups acting on the space $X$. Similarly, the group of binary transformations $H_2(X)$ serves as a universal object for topological groups acting binarily on $X$.

The foundation of the theory of binary $G$-spaces, or groups of binary transformations, was initiated in \cite{Gev}. This theory was further developed in \cite{Gev2,Gev-Iliadis,Gev-Naz,{Gev-3},Gev-Quitzeh,Gev-Quitzeh-1,vestnikmgu}. In the theory of binary $G$-spaces, a special role is played by distributive binary $G$-spaces, because important concepts and results of equivariant topology extend to them \cite{Gev-Naz,Gev-3,{Gev-Quitzeh-1}}. In particular, a complete classification of transitive binary actions has been obtained for the class of distributive binary $G$-spaces \cite{vestnikmgu}.

In the present paper, we introduce the notion of a semitransitive binary $G$-space and solve the problem of classifying semitransitive binary actions in the class of distributive binary $G$-spaces. It is proved that every semitransitive distributive binary $G$-space admits a structure of a topological field compatible with the given binary action (Theorem \ref{th-main}). Thus, we establish a one-to-one correspondence between semitransitive distributive binary $G$-spaces and topological fields with the multiplicative group $G$ (the duality theorem \ref{th-classification}). This correspondence is compatible with isomorphisms of the corresponding objects. Hence, the category of semitransitive distributive binary $G$-spaces is equivalent to the category of topological fields whose multiplicative group is isomorphic to $G$. It should be noted that the proof of Theorem~\ref{th-main} relies on a number of ideas presented in \cite{belousov}, which is devoted to the study of distributive systems of operations.

Theorem~\ref{th-classification}, like many duality theorems, has important applications. One of them concerns the description of possible underlying sets of semitransitive distributive binary actions of finite groups: if there exists a semitransitive distributive binary action of a finite group $G$ on a space $X$, then the cardinality of this space is a power of a prime number (Corollary~\ref{cor-pn}).

Another important application of Theorem~\ref{th-classification} is related to the problem of describing multiplicative groups of topological fields. This problem has deep algebraic roots. As early as 1960, A.~I.~Maltsev formulated the hypothesis that it is impossible to describe multiplicative groups of fields by means of first-order formulas. This hypothesis was subsequently confirmed. Nevertheless, the problem is solvable using second-order formulas. The following assertion holds: a topological group $G$ is the multiplicative group of some topological field if and only if it admits a binary representation possessing the properties of semitransitivity and distributivity (Theorem~\ref{th-multgr}).

%----------------------------------

\section{Binary \texorpdfstring{$G$}{G}-spaces and Transitivity}

In this section, we recall the necessary definitions and present results from biequivariant topology \cite{Gev,Gev2,Gev-Iliadis,Gev-Naz,Gev-3,vestnikmgu} that will be used in what follows.

Let $X$ be a topological space, and let $C_2(X)$ be the set of all continuous binary operations on $X$. On the set $C_2(X)$, define the multiplication of two elements by the formula
\begin{equation}\label{eq-operation}
\varphi \psi (x,y) = \varphi(x, \psi (x,y)),
\end{equation}
where $\varphi, \psi \in C_2(X)$ and $x,y \in X$.

\begin{proposition}[\cite{Gev}]
The space $C_2(X)$, endowed with the compact-open topology, is a topological monoid under the operation~\eqref{eq-operation}, whose unit is \( e(x,y) = y \).
\end{proposition}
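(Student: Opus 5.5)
The argument splits into an algebraic part and a topological part. For the algebra, I first check that the product \eqref{eq-operation} of two continuous maps $\varphi,\psi\colon X^2\to X$ is again continuous. It is the composite
\[
X^2 \xrightarrow{(\mathrm{pr}_1,\psi)} X^2 \xrightarrow{\varphi} X,
\]
so it lies in $C_2(X)$. For associativity, I compute both sides:
\[
(\varphi\psi)\chi(x,y)=\varphi\psi(x,\chi(x,y))=\varphi\bigl(x,\psi(x,\chi(x,y))\bigr),
\]
\[
\varphi(\psi\chi)(x,y)=\varphi\bigl(x,\psi\chi(x,y)\bigr)=\varphi\bigl(x,\psi(x,\chi(x,y))\bigr).
\]
These agree. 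For the unit $e(x,y)=y$, one has $e\varphi(x,y)=\varphi(x,y)$ and $\varphi e(x,y)=\varphi(x,y)$, and $e$ is continuous. So $C_2(X)$ is a monoid.

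The topological part is continuity of the multiplication $C_2(X)\times C_2(X)\to C_2(X)$ in the compact-open topology, and I expect this to be the main obstacle. The product factors as
\[
(\varphi,\psi)\mapsto\bigl(\varphi,(\mathrm{pr}_1,\psi)\bigr)\mapsto \varphi\circ(\mathrm{pr}_1,\psi).
\]
The first map is continuous: $\psi\mapsto(\mathrm{pr}_1,\psi)$ is continuous into $C(X^2,X^2)$ because a map into a product is continuous in the compact-open topology exactly when its coordinates are. The second map is composition $C(X^2,X)\times C(X^2,X^2)\to C(X^2,X)$. Composition is jointly continuous when the middle space $X^2$ is locally compact Hausdorff, so I assume that standing hypothesis on $X$, as in \cite{Gev}.

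For the key step I argue directly on a subbasic neighbourhood $[K,U]$ of $\varphi\circ(\mathrm{pr}_1,\psi)$, where $K\subset X^2$ is compact and $U\subset X$ is open. The set $L=(\mathrm{pr}_1,\psi)(K)$ is compact and satisfies $L\subset\varphi^{-1}(U)$. By local compactness there is an open $V$ with compact closure such that $L\subset V\subset\overline V\subset\varphi^{-1}(U)$. Then for every
\[
\varphi'\in[\overline V,U],\qquad \psi'\in\bigl\{\psi' : (\mathrm{pr}_1,\psi')(K)\subset V\bigr\},
\]
the product $\varphi'\psi'$ lies in $[K,U]$.

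It remains to see that the set of such $\psi'$ is an open neighbourhood of $\psi$. I cover $K$ by finitely many compact pieces $K_i$ on which $(\mathrm{pr}_1,\psi)(K_i)$ lies in a basic box $A_i\times B_i\subset V$; this is the usual tube-lemma argument. The set is then controlled by the conditions $\psi'\in[K_i,B_i]$ together with $\mathrm{pr}_1(K_i)\subset A_i$. Intersecting these finitely many subbasic sets gives the required open neighbourhood of $\psi$, which completes the proof.
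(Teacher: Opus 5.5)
The paper gives no proof of this proposition; it quotes it from \cite{Gev}, so there is no internal argument to compare yours with. Your proof is correct, and it is the standard one. Writing $\varphi\psi=\varphi\circ(\mathrm{pr}_1,\psi)$ reduces joint continuity to the usual argument for continuity of composition. Your direct check on a subbasic set $[K,U]$ is complete: you put $L=(\mathrm{pr}_1,\psi)(K)$ inside a relatively compact open $V$ with $\overline V\subset\varphi^{-1}(U)$, take the neighbourhood $[\overline V,U]$ of $\varphi$, and control $\psi'$ by finitely many sets $[K_i,B_i]$.

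One small correction. Splitting $K$ into the pieces $K_i$ is not a tube-lemma argument. It uses that the compact Hausdorff space $K$ is regular. Hence each $k\in K$ has a compact neighbourhood in $K$ contained in $(\mathrm{pr}_1,\psi)^{-1}(A_k\times B_k)$, and compactness of $K$ then yields finitely many such pieces.

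More importantly, the hypothesis you add ($X$ locally compact Hausdorff) is not just a convenience. The proposition as quoted here, for an arbitrary space $X$, is false, and here is why.

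\begin{itemize}
\item The map $f\mapsto\bigl((x,y)\mapsto f(y)\bigr)$ embeds $C(X)$ into $C_2(X)$ as a topological submonoid; composition goes to the product \eqref{eq-operation} and $\mathrm{id}_X$ goes to $e$.
\item The map $x\mapsto c_x$ (the constant map at $x$) embeds $X$ into $C(X)$, and $f\circ c_x=c_{f(x)}$.
\item So joint continuity of the product on $C_2(X)$ would force the evaluation map $C(X)\times X\to X$ to be continuous.
\end{itemize}

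For $X=\mathbb{Q}$ evaluation is not continuous at $(0,0)$, where $0$ also denotes the zero map.

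\begin{itemize}
\item Take any basic neighbourhood $\bigcap_j[K_j,U_j]\times W$ of $(0,0)$, and set $K=\bigcup_j K_j$.
\item $K$ is compact in $\mathbb{Q}$, so it has empty interior. Pick $w\in W\setminus K$ and a clopen set $C\ni w$ disjoint from $K$.
\item Let $f=2\chi_C$. Then $f$ vanishes on $K$, so $f(K_j)=\{0\}\subset U_j$ for each $j$, and $(f,w)$ lies in the neighbourhood.
\item But $f(w)=2\notin(-1,1)$.
\end{itemize}

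So the correct version of the statement is the one you prove, with $X$ locally compact Hausdorff (or at least with some such local compactness assumption).
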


The invertible elements of the monoid $C_2(X)$ are called \emph{binary transformations} of the space $X$. Denote by $H_2(X)$ the group of all invertible elements of the monoid $C_2(X)$, that is, the group of binary transformations of the space $X$.

The following assertion holds \cite{Gev2}.

\begin{theorem}\label{th-00}
Let $f\in H_2(X)$ be a binary transformation of the space $X$. Then, for every $a\in X$, the map $f_a : X \to X$ defined by
\begin{equation}
f_a(x) = f(a, x)
\end{equation}
is a homeomorphism of the space $X$.
\end{theorem}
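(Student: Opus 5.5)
The plan is to use the inverse $f^{-1}\in H_2(X)$ and show that, for each fixed $a\in X$, the map $(f^{-1})_a$ is a two-sided inverse of $f_a$. The key observation is that the multiplication \eqref{eq-operation} keeps the first argument fixed. Explicitly, for $\varphi,\psi\in C_2(X)$ and any $a\in X$,
\[
(\varphi\psi)_a(x)=\varphi\psi(a,x)=\varphi(a,\psi(a,x))=\varphi_a(\psi_a(x)),
\]
so $(\varphi\psi)_a=\varphi_a\circ\psi_a$. Moreover, the unit $e(x,y)=y$ satisfies $e_a=\mathrm{id}_X$ for every $a$. Hence, for each fixed $a$, the assignment $\varphi\mapsto\varphi_a$ is a monoid homomorphism from $C_2(X)$ to the monoid of self-maps of $X$ under composition.

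Let $g=f^{-1}$, so that $fg=gf=e$ in $C_2(X)$. Applying the homomorphism above gives
\[
f_a\circ g_a=(fg)_a=e_a=\mathrm{id}_X
\quad\text{and}\quad
g_a\circ f_a=(gf)_a=e_a=\mathrm{id}_X .
\]
Thus $f_a$ is a bijection with inverse $g_a$.

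It remains to check continuity of $f_a$ and $g_a$. Each map $\varphi_a$ is the composite
\[
X\to X^2\to X,\qquad x\mapsto (a,x)\mapsto \varphi(a,x).
\]
The inclusion $x\mapsto(a,x)$ is continuous in the product topology, and $\varphi$ is continuous by assumption. Therefore both $f_a$ and $g_a=(f^{-1})_a$ are continuous, so $f_a$ is a homeomorphism.

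There is no real obstacle here. The only point needing care is that the inverse of $f$ is taken in the monoid $C_2(X)$, so $f^{-1}$ is itself continuous on $X^2$; this is exactly what makes $(f^{-1})_a$ continuous. We also need the composition identity for both orders of multiplication, which holds because the identity $(\varphi\psi)_a=\varphi_a\circ\psi_a$ is valid for arbitrary $\varphi,\psi$.
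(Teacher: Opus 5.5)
Your proof is correct. The paper gives no proof of this statement and simply cites earlier work. Your argument is the standard one: since $(\varphi\psi)_a=\varphi_a\circ\psi_a$ and $e_a=\mathrm{id}_X$, the map $(f^{-1})_a$ is a continuous two-sided inverse of $f_a$, and this is exactly what the statement requires.
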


A binary operation $\varphi \in C_2(X)$ is called a continuous \emph{left quasigroup operation} on $X$ if the following condition holds: for any elements $a$ and $b$ in $X$, the equation $\varphi(a,x)=b$ has a unique solution $x \in X$.

Theorem \ref{th-00} immediately implies the following assertion.

\begin{proposition}
Let $f\in H_2(X)$ be a binary transformation of the space $X$. Then $f$ is a left quasigroup operation on $X$.
\end{proposition}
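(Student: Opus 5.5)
The claim follows directly from Theorem~\ref{th-00}. Fix a binary transformation $f\in H_2(X)$ and arbitrary elements $a,b\in X$. We must show that the equation $f(a,x)=b$ has exactly one solution $x\in X$.

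By Theorem~\ref{th-00}, the map $f_a\colon X\to X$, $f_a(x)=f(a,x)$, is a homeomorphism, and in particular a bijection.
\begin{itemize}
\item \emph{Existence.} Surjectivity of $f_a$ gives some $x\in X$ with $f_a(x)=b$, that is, $f(a,x)=b$. Explicitly, $x=f_a^{-1}(b)$.
\item \emph{Uniqueness.} If $f(a,x_1)=f(a,x_2)=b$, then $f_a(x_1)=f_a(x_2)$, and injectivity of $f_a$ gives $x_1=x_2$.
\end{itemize}
Continuity of $f$ holds because $f\in C_2(X)$. Hence $f$ is a continuous left quasigroup operation.

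There is essentially no obstacle once Theorem~\ref{th-00} is available; all the content lies in that theorem. If one wanted a self-contained argument, the bijectivity of $f_a$ can be checked directly from the inverse $f^{-1}$ in $H_2(X)$. The identity $ff^{-1}=e$ reads $f(x,f^{-1}(x,y))=y$, so $f_a\circ (f^{-1})_a=\mathrm{id}$. The identity $f^{-1}f=e$ reads $f^{-1}(x,f(x,y))=y$, so $(f^{-1})_a\circ f_a=\mathrm{id}$. Thus $f_a$ is a bijection with inverse $(f^{-1})_a$, and the solution of $f(a,x)=b$ is $x=f^{-1}(a,b)$.
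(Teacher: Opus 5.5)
Your proof is correct and is essentially the paper's argument: the paper deduces the statement immediately from Theorem~\ref{th-00}, using bijectivity of $f_a$ to get existence and uniqueness of the solution of $f(a,x)=b$. Your supplementary direct check, that $(f^{-1})_a$ is a two-sided inverse of $f_a$ by the identities $ff^{-1}=f^{-1}f=e$, is also correct and makes the argument self-contained.
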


Thus, all elements of the group $H_2(X)$ are continuous left quasigroup operations on $X$.

Let $G$ be a topological group. A continuous map
\(
\alpha : G \times X^2 \to X
\)
is called a \textit{binary action} of the topological group $G$ on the space $X$ if the following conditions hold:

1) $\alpha (gh, x,y)=\alpha(g, x, \alpha(h,x,y))$;

2) $\alpha (e, x,y)=y$;

\noindent
for all $g,h\in G$ and $x,y\in X$, where $e$ is the identity element of the group $G$.

Using the notation $\alpha(g, x, y) = g(x,y)$, conditions 1) and 2) can be rewritten as
\[
gh(x,y) = g(x, h(x,y)), \qquad e(x,y) = y.
\]

A space $X$ equipped with a fixed binary action of the group $G$, or the triple $(G,X,\alpha)$, is called a \textit{binary $G$-space}.

It is easy to verify that any topological group $G$ acts binarily on itself by means of the \emph{conjugate left translation}
\begin{equation}\label{eq-slsdvig}
g(x,y) = x g x^{-1} y,
\end{equation}
where $g, x, y \in G$.

For arbitrary $g \in G$, consider the continuous binary operation $\alpha_g : X^2 \to X$ defined by
\[
\alpha_g(x,y) = \alpha(g,x,y).
\]
The map $g \mapsto \alpha_g$ defines a homomorphism
\[
\theta : G \to H_2(X)
\]
from the group $G$ to the group $H_2(X)$ of all binary transformations of the space $X$.

The kernel of this homomorphism $\theta$ is called the \emph{kernel of the binary action $\alpha$} and is denoted by ${\rm Ker}\,\alpha$. Thus,
\[
{\rm Ker}\,\alpha = \{g \in G \mid g(x,y) = y \ \text{for all } x,y \in X\}.
\]
This set is a closed normal subgroup of the group $G$.

The binary action $\alpha$ is called \emph{effective} if ${\rm Ker}\,\alpha = \{e\}$. Since any binary action of the group $G$ on $X$ with kernel ${\rm Ker}\,\alpha$ induces an effective binary action of the group $G/{\rm Ker}\,\alpha$ on $X$, we will henceforth consider only effective binary actions.

A \emph{binary representation} of a topological group $G$ is its continuous homomorphism into the group $H_2(X)$, where $X$ is some topological space. Since every binary action of a group $G$ on a space $X$ generates a binary representation $G \to H_2(X)$ and, conversely, every such representation defines a binary action on $X$, a binary representation of the group $G$ may be regarded as its binary action on the corresponding space $X$.

Let $(G,X,\alpha)$ and $(G,Y,\beta)$ be binary $G$-spaces. A continuous map $f: X \to Y$ is called \emph{biequivariant} if
\[
f(\alpha(g,x,y)) = \beta(g,f(x),f(y)),
\]
or, equivalently,
\[
f(g(x,y)) = g(f(x),f(y)),
\]
for all $g \in G$ and $x,y \in X$.

A biequivariant map $f: X \to Y$ that is a homeomorphism is called an \emph{equivalence} of binary $G$-spaces, or a \emph{biequimorphism}. All binary $G$-spaces together with biequivariant maps form a category.

A binary $G$-space $(G,X,\alpha)$ is called \emph{distributive} if
\begin{equation*}\label{eq1-1}
g(x,h(y,z)) = h(g(x,y), g(x,z))
\end{equation*}
for all $g,h \in G$ and $x,y,z \in X$. In this case, $\alpha$ is called a \emph{distributive binary action} of the group $G$ on $X$.

The binary action of a group $G$ on itself by means of the conjugate left translation
\[
g(x,y) = x g x^{-1} y
\]
is a distributive binary action.

For arbitrary $g \in G$ and $x \in X$, consider the map $g_x : X \to X$ defined by
\[
g_x(y) = g(x,y), \quad y \in X.
\]
The following criterion for distributivity of a binary action of a group $G$ on $X$ holds.

\begin{theorem}[\cite{vestnikmgu}]\label{th-criteria}
A binary $G$-space $X$ is distributive if and only if one of the following conditions holds:

{\rm 1)} the map $g_x : X \to X$ is a biequivariant homeomorphism for all $x \in X$;

{\rm 2)} the equality $g_x h_y = h_{g_x(y)} g_x$ holds for all $g,h \in G$ and $x,y \in X$.
\end{theorem}

The set
\[
G_{(x,y)} = \{g \in G \mid g(x,y) = y\},
\]
where $(x,y)$ is a fixed pair of points of the binary $G$-space $X$, is a closed subgroup of the group~$G$. This subgroup is called the \emph{stationary subgroup} of the pair $(x,y)$. The subgroup $G_{(x,x)}$ is called the stationary subgroup of the point $x$.

In general, the stationary subgroup $G_{(x,y)}$ is not a normal subgroup of the group $G$. However, the following assertion holds.

\begin{proposition}[\cite{vestnikmgu}]
Let $X$ be a distributive binary $G$-space, and let $x \in X$ be an arbitrary point. Then the stationary subgroup $G_{(x,x)}$ is a closed normal subgroup of the group $G$.
\end{proposition}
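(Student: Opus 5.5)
Closedness of $G_{(x,x)}$ follows directly from continuity, so the work is in subgroup and normality. The orbit map $g\mapsto g(x,x)$ is continuous, and $G_{(x,x)}$ is the preimage of the point $x$ under it. When $X$ is $T_1$ this preimage is closed. If the ambient convention does not guarantee $T_1$, I would simply inherit closedness from the general fact quoted just before, that every $G_{(x,y)}$ is a closed subgroup.

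For the subgroup property I would check the axioms directly. If $g(x,x)=x$ and $h(x,x)=x$, then $gh(x,x)=g(x,h(x,x))=g(x,x)=x$. For inverses, $g^{-1}(x,x)=g^{-1}(x,g(x,x))=(g^{-1}g)(x,x)=e(x,x)=x$.

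The substantive step is normality: given $h\in G_{(x,x)}$ and $g\in G$, I must show $ghg^{-1}\in G_{(x,x)}$. By the action axiom,
\[
(ghg^{-1})(x,x)=g\bigl(x,\,h(x,\,g^{-1}(x,x))\bigr).
\]
Set $y=g^{-1}(x,x)$. It then suffices to show $h(x,y)=y$, because then $g(x,y)=g(x,g^{-1}(x,x))=x$.

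To get $h(x,y)=y$, I use criterion 2) of Theorem~\ref{th-criteria} with $g^{-1}$ in place of $g$:
\[
g^{-1}_x h_x = h_{g^{-1}_x(x)}\, g^{-1}_x .
\]
Apply both sides to $x$. The left side gives $g^{-1}_x(h(x,x))=g^{-1}(x,x)=y$. The right side gives $h_y(y)=h(y,y)$. So this only yields $h(y,y)=y$, which is not what is needed.

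I would instead use distributivity in the form $h(x,g^{-1}(x,x)) = g^{-1}(h(x,x),h(x,x))$. This holds because the general identity $h(x,k(y,z))=k(h(x,y),h(x,z))$ with $k=g^{-1}$ and $y=z=x$ gives exactly it. The right side equals $g^{-1}(x,x)=y$ since $h(x,x)=x$. Hence $h(x,y)=y$ and the conjugate fixes $x$.

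The main obstacle is choosing the right instance of the distributive law: one must distribute $h$ over $g^{-1}$ at the base point $x$, rather than going through the conjugation form in Theorem~\ref{th-criteria}. Once that instance is found, everything else is routine use of the action axioms.
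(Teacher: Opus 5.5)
Your proof is correct. The paper gives no proof of this proposition (it is quoted from the cited source), and your direct verification is the natural argument. The key identity $h(x,g^{-1}(x,x))=g^{-1}(x,x)$ for $h\in G_{(x,x)}$ is exactly biequivariance of $h_x$, i.e.\ criterion 1) of Theorem~\ref{th-criteria}. It is also what criterion 2) gives if you swap the roles of $g^{-1}$ and $h$ in your abandoned attempt: $h_xg^{-1}_x=g^{-1}_{h_x(x)}h_x=g^{-1}_xh_x$, applied to $x$. Your caveat on closedness is appropriate: the preimage argument needs $\{x\}$ to be closed, which the paper tacitly assumes when it asserts that every $G_{(x,y)}$ is closed.
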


A binary action of a group $G$ on $X$ is called \emph{transitive} if $G(x,x)=X$ for every $x \in X$. In this case, $X$ is called a \emph{transitive binary $G$-space}~\cite{vestnikmgu}. In fact, transitivity of a binary action implies that $G(x,y)=X$ for all $x,y \in X$.

Let $H$ be a closed normal subgroup of the group $G$. The map
\[
\alpha : G \times G/H \times G/H \to G/H,
\]
defined by
\begin{equation}\label{eq-g(kH,lH)}
g(kH,lH) = (kgk^{-1}l)H,
\end{equation}
where $g,k,l \in G$ and $g(kH,lH) = \alpha(g,kH,lH)$, is a transitive distributive binary action of the group $G$ on $G/H$.

The following theorem shows that, in the class of all distributive binary $G$-spaces, the spaces $G/H$ with the binary action~\eqref{eq-g(kH,lH)} exhaust all transitive binary $G$-spaces.

\begin{theorem}[\cite{vestnikmgu}]\label{th-2}
Every transitive distributive binary $G$-space $X$, where $G$ is a compact group, is biequivariantly homeomorphic to the coset space $G/H$ for some closed normal subgroup $H$ of $G$, equipped with the binary action~\eqref{eq-g(kH,lH)}.
\end{theorem}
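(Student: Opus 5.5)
Fix a point $x_0\in X$ and let $H=G_{(x_0,x_0)}$, which by the Proposition on stationary subgroups is a closed normal subgroup of $G$. The candidate equivalence is
\[
\Phi : G/H \to X,\qquad \Phi(kH)=k(x_0,x_0).
\]
The plan is to show that $\Phi$ is well defined, bijective and biequivariant, and then to use compactness of $G$ to conclude that it is a homeomorphism.

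\textbf{Well-definedness and injectivity.} Both reduce to the identity $k(x_0,x_0)=l(x_0,x_0)\iff l^{-1}k\in H$. Applying $l^{-1}(x_0,\cdot)$, which is a bijection by Theorem~\ref{th-00}, to both sides and using axiom 1) gives $(l^{-1}k)(x_0,x_0)=x_0$. This is exactly the condition $l^{-1}k\in H$.

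\textbf{Surjectivity and continuity.} Surjectivity is transitivity, $G(x_0,x_0)=X$. Continuity of $\Phi$ follows because $k\mapsto \alpha(k,x_0,x_0)$ is continuous and $G\to G/H$ is a quotient map.

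\textbf{Biequivariance.} This is the key step: we must check $\Phi\bigl((kgk^{-1}l)H\bigr)=g\bigl(\Phi(kH),\Phi(lH)\bigr)$. Write $x=k(x_0,x_0)$ and $y=l(x_0,x_0)$. By axiom 1), the left side equals $(kgk^{-1})\bigl(x_0,y\bigr)=k_{x_0}\,g_{x_0}\,k^{-1}_{x_0}(y)$. The right side is $g_x(y)$ with $x=k_{x_0}(x_0)$. Theorem~\ref{th-criteria}\,2) gives
\[
k_{x_0}\,g_{x_0}=g_{k_{x_0}(x_0)}\,k_{x_0},
\]
and hence $k_{x_0}\,g_{x_0}\,k_{x_0}^{-1}=g_{x}$. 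Applying this to $y$ yields the required equality. The main point is this conjugation identity, which follows directly from distributivity.

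\textbf{Homeomorphism.} The step I expect to need care is the topological one, where compactness enters. Since $G$ is compact, $G/H$ is compact, and $\Phi$ is a continuous bijection onto $X$. If $X$ is Hausdorff, $\Phi$ is a closed map and therefore a homeomorphism. I would assume the standing Hausdorff convention for $G$-spaces; otherwise one needs openness of the orbit map, which is where a Baire-type argument would be needed.
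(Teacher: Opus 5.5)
Your proof is correct. The paper does not prove this theorem; it quotes it from \cite{vestnikmgu}. Your route is the standard one and runs parallel to the unary case: the orbit map $kH\mapsto k(x_0,x_0)$ out of $G/G_{(x_0,x_0)}$, biequivariance from the identity $k_{x_0}g_{x_0}k_{x_0}^{-1}=g_{k(x_0,x_0)}$ given by condition 2) of Theorem~\ref{th-criteria}, and the compact-to-Hausdorff argument for the topology.

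One correction to your last paragraph. The Hausdorff hypothesis on $X$ is not a convenience that a Baire-type openness argument could replace: without some separation axiom the statement is false. Take $G=\mathbb{Z}/2$ with the discrete topology, and let $X=G$ carry the indiscrete topology and the conjugate left translation $g(x,y)=xgx^{-1}y$. Every map into $X$ is continuous, and this action is transitive, distributive and effective. A bijection $G/H\to X$ forces $H=\{e\}$, so $G/H$ is the discrete two-point space, which is not homeomorphic to $X$. The standing Hausdorff convention is therefore exactly what the theorem needs, and your proof uses it once, at the only place it must.
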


It follows from this theorem that, in the class of all effective distributive binary $G$-spaces, there exists only one transitive binary $G$-space. More precisely, the following assertion holds.

\begin{theorem}[\cite{vestnikmgu}]\label{th-3}
Let $G$ be a compact group. Then any effective binary $G$-space that is both transitive and distributive is biequivariantly homeomorphic to the topological group $G$ with the binary action given by the conjugate left translation~\eqref{eq-slsdvig}.
\end{theorem}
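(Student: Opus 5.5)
The plan is to reduce Theorem~\ref{th-3} to Theorem~\ref{th-2} and then use effectiveness to force the normal subgroup $H$ to be trivial. Since $G$ is compact and $X$ is transitive and distributive, Theorem~\ref{th-2} gives a biequivariant homeomorphism $f\colon X\to G/H$ for some closed normal subgroup $H$ of $G$. Here $G/H$ carries the binary action $g(kH,lH)=(kgk^{-1}l)H$ of \eqref{eq-g(kH,lH)}. Biequivariant homeomorphisms preserve kernels of binary actions, because $g(x,y)=y$ for all $x,y\in X$ if and only if $g(f(x),f(y))=f(y)$ for all points of $G/H$. Hence the action on $G/H$ is effective as well.

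The key step is computing the kernel of the action \eqref{eq-g(kH,lH)}. An element $g$ lies in the kernel if and only if $(kgk^{-1}l)H=lH$ for all $k,l\in G$. Equivalently, $l^{-1}kgk^{-1}l\in H$ for all $k,l$. Taking $k=l=e$ gives $g\in H$. Conversely, if $g\in H$, then normality of $H$ gives $l^{-1}kgk^{-1}l=(l^{-1}k)g(l^{-1}k)^{-1}\in H$. So the kernel is exactly $H$, and effectiveness forces $H=\{e\}$.

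With $H=\{e\}$ we identify $G/\{e\}$ with $G$ via $k\{e\}\mapsto k$. This is a homeomorphism, since the quotient map by the trivial subgroup is a homeomorphism. Under this identification the action \eqref{eq-g(kH,lH)} becomes $g(k,l)=kgk^{-1}l$, which is precisely the conjugate left translation \eqref{eq-slsdvig}. Composing this identification with $f$ gives the desired biequivariant homeomorphism $X\to G$.

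No step is a real obstacle once Theorem~\ref{th-2} is available. The only points needing care are that biequivariant homeomorphisms transport effectiveness, and that normality of $H$ gives the reverse inclusion in the kernel computation.
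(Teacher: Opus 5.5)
Your proof is correct and follows the route the paper indicates, since the paper presents Theorem~\ref{th-3} as a consequence of Theorem~\ref{th-2}. You pass to $G/H$ via Theorem~\ref{th-2}, transport effectiveness along the biequivariant homeomorphism, and use normality to see that the kernel of the action \eqref{eq-g(kH,lH)} is exactly $H$; effectiveness then forces $H=\{e\}$, and the action becomes the conjugate left translation \eqref{eq-slsdvig}, which supplies the verifications the paper leaves to its citation.
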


%--------------------------------

\section{Semitransitive binary \texorpdfstring{$G$}{G}-spaces and topological fields}

\begin{definition}\label{def:semi-trans}
A binary $G$-space $X$ is called \emph{semitransitive} if the following conditions hold:

{\rm 1)} $G(x,x)=x$ for any $x\in X$ and

{\rm 2)} $G(x,y)=X \setminus \{x\}$ for any distinct points $x,y \in X$.
\end{definition}

\begin{proposition}\label{ex-pole}
Let $F$ be a topological field, and let $G$ be the multiplicative group of this field. Then the map
\[
\alpha : G \times F^2 \to F,
\]
defined by
\begin{equation}\label{eq-pole}
p(a, b) = (1 - p)a + p b,
\end{equation}
where $p(a, b) = \alpha(p, a, b)$, $p \in G$, $a, b \in F$, is a semitransitive distributive binary action of the group $G$ on $F$.
\end{proposition}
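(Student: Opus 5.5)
The proof is a direct verification of the defining properties, one at a time. The only facts about $F$ we need are that it is a field, that $G=F\setminus\{0\}$ carries the subspace topology, and that field addition and multiplication are continuous.

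\textbf{Binary action axioms.} Continuity of $\alpha(p,a,b)=(1-p)a+pb$ on $G\times F^2$ follows at once from continuity of the field operations. The unit axiom is $1(a,b)=0\cdot a+1\cdot b=b$. For the composition axiom we compute
\[
p(a,q(a,b))=(1-p)a+p\bigl((1-q)a+qb\bigr)=(1-pq)a+pqb=(pq)(a,b),
\]
since $(1-p)+p(1-q)=1-pq$. It is convenient to record the identity $p(a,b)-a=p(b-a)$: geometrically, $p$ acts as the dilation with centre $a$ and ratio $p$.

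\textbf{Distributivity.} We check $p(a,q(b,c))=q(p(a,b),p(a,c))$ for all $p,q\in G$ and $a,b,c\in F$. The cleanest route uses Theorem~\ref{th-criteria}(2). Here $p_a$ is the affine map $y\mapsto (1-p)a+py$. Both sides of $p_a q_b=q_{p_a(b)}p_a$ are affine maps with linear part $pq$. Both fix the point $p_a(b)$: indeed $p_a q_b(b)=p_a(b)$, and $q_{p_a(b)}$ fixes $p_a(b)$. Two affine maps of $F$ with the same linear part and a common fixed point coincide, so the identity holds. Alternatively, one can expand both sides directly and compare the coefficients of $a$, $b$, $c$; both give
\[
(1-p)a+p(1-q)b+pq\,c .
\]

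\textbf{Semitransitivity.} For condition 1), $p(x,x)=(1-p)x+px=x$ for every $p$, so $G(x,x)=\{x\}$. For condition 2), take $x\neq y$. Then $p(x,y)=x+p(y-x)$, and since $p\neq 0$ and $y-x\neq 0$ this is never equal to $x$. Conversely, given $z\neq x$, the element
\[
p=\frac{z-x}{y-x}
\]
is nonzero, hence lies in $G$, and satisfies $p(x,y)=z$. Therefore $G(x,y)=F\setminus\{x\}$.

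No step is a real obstacle; the distributivity identity is the only computation, and the coefficient comparison settles it.
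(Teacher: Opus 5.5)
Your proof is correct and follows the paper's direct verification: you use the same identity $(1-p)+p(1-q)=1-pq$ for the composition axiom and the same coefficient expansion $(1-p)a+p(1-q)b+pq\,c$ for distributivity, and your affine-map argument simply repackages that identity. In the semitransitivity step you solve for the group element, $p=(z-x)/(y-x)\neq 0$, and also check that $p(x,y)\neq x$; this is the right argument and is cleaner than the paper's, which solves $(1-p)a+pb=c$ for $b$ rather than for $p$ and so does not by itself show $G(a,b)=F\setminus\{a\}$ for fixed $b$.
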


\begin{proof}
We first prove that $\alpha$ is a binary action. Indeed,

\begin{enumerate}[label=\textup{\arabic*)}]
\item $\begin{aligned}[t]
p(a,q(a,b))
&= (1-p)a + p\bigl((1-q)a+qb\bigr) \\
&= a-pa+p(1-q)a+pqb \\
&= a-pqa+pqb \\
&= (1-pq)a+pqb \\
&= pq(a,b);
\end{aligned}$

\item $1(a,b) = (1-1)a+1b=b.$
\end{enumerate}

To prove distributivity of this binary action, note that
\begin{align*}
p(a,q(b,c))
&= (1-p)a + p\bigl((1-q)b+qc\bigr) \\
&= (1-p)a + p(1-q)b + pqc, \\[5pt]
q\bigl(p(a,b),p(a,c)\bigr)
&= (1-q)\bigl((1-p)a+pb\bigr)
 + q\bigl((1-p)a+pc\bigr) \\
&= (1-q)(1-p)a + (1-q)pb
 + q(1-p)a + qpc \\
&= (1-p)(1-q+q)a + p(1-q)b + pqc \\
&= (1-p)a + p(1-q)b + pqc.
\end{align*}
Thus,
\[
p(a, q(b, c)) = q(p(a, b), p(a, c)),
\]
which means that the binary action $\alpha$ is distributive.

Now we prove that $\alpha$ satisfies the conditions of Definition~\ref{def:semi-trans} of semitransitivity.

1) For all $p \in G$ and $a \in F$ we have
    \[
    p(a, a) = (1 - p)a + pa = a.
    \]

2) Let $a, b \in F$, $a \ne b$. For any $c \in F \setminus \{a\}$ the equation
    \[
    (1 - p)a + p b = c
    \]
    has a unique solution for $b$, namely:
   \[
    b = \frac{1}{p}\left(c - (1 - p)a\right).
    \]
Therefore, $G(a, b) = F \setminus \{a\}$. 
\end{proof}

Thus, there exists a natural semitransitive distributive binary action of the multiplicative group of a topological field on the field itself, given by formula~\eqref{eq-pole}.

We will now show that there are no other semitransitive distributive binary $G$-spaces (see Theorem~\ref{th-main}).

\begin{proposition}\label{prop-1,2}
Let $X$ be a semitransitive distributive binary $G$-space. Then for any distinct points $x,y \in X$ the isotropy subgroup $G_{(x,y)}$ is trivial.
\end{proposition}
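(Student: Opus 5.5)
The plan is to show that any $g\in G_{(x,y)}$ acts trivially, that is, $g(w,z)=z$ for all $w,z\in X$. Then $g\in{\rm Ker}\,\alpha=\{e\}$, because we only consider effective binary actions. I will do this in two steps. First I show $g_x=\mathrm{id}_X$ using distributivity and semitransitivity. Then I move this identity to every other base point with the conjugation formula of Theorem~\ref{th-criteria}.

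\emph{Step 1: $g_x=\mathrm{id}_X$.} Fix distinct $x,y$ and $g$ with $g(x,y)=y$. By condition 1) of Definition~\ref{def:semi-trans}, $g(x,x)=x$. Now let $z\neq x$. By condition 2), $G(x,y)=X\setminus\{x\}$, so $z=h(x,y)$ for some $h\in G$. Distributivity gives
\[
g(x,z)=g\bigl(x,h(x,y)\bigr)=h\bigl(g(x,x),g(x,y)\bigr)=h(x,y)=z .
\]
So $g_x$ fixes every point of $X$.

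\emph{Step 2: transport to all base points.} Condition 2) of Theorem~\ref{th-criteria}, with the roles renamed, reads $k_u g_v=g_{k_u(v)}k_u$ for all $k\in G$ and $u,v\in X$. By Theorem~\ref{th-00}, $k_u$ is invertible, so
\[
g_{k_u(v)}=k_u\, g_v\, k_u^{-1}.
\]
Hence whenever $g_v=\mathrm{id}$, also $g_{w}=\mathrm{id}$ for every $w\in G(u,v)$.
\begin{itemize}
\item Take $v=x$ and $u=y$. Then $g_w=\mathrm{id}$ for all $w\in G(y,x)=X\setminus\{y\}$.
\item For the remaining point $y$, take $v=x$ and $u=z$ with $z$ any point other than $x$ and $y$. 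Then $y\in G(z,x)=X\setminus\{z\}$, so $g_y=\mathrm{id}$.
\item If $X=\{x,y\}$ and no such $z$ exists, use Step 1 applied at base $y$ instead. Since $g_x=\mathrm{id}$, we have $g(y,x)=x$, so $g\in G_{(y,x)}$. Step 1 with the roles of $x$ and $y$ swapped then gives $g_y=\mathrm{id}$.
\end{itemize}
Therefore $g(w,z)=z$ for all $w,z$, so $g\in{\rm Ker}\,\alpha=\{e\}$.

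The only delicate point is Step 2: one has to notice that Theorem~\ref{th-criteria}(2) acts as a conjugation rule on the maps $g_v$, and that semitransitivity then spreads the identity $g_v=\mathrm{id}$ over all base points. The two-point case needs the small separate check given above.
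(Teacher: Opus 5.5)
Your Step 1 is the paper's argument. You also correctly restrict to $z\neq x$ and handle $z=x$ by condition 1) of the definition; the paper writes $z\neq y$ at that point.

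The paper stops once it has $g(x,z)=z$ for all $z$ at the single base point $x$, and concludes directly that $g\in\mathrm{Ker}\,\alpha$. But the kernel consists of the $g$ with $g(w,z)=z$ for \emph{all} $w$ and $z$. So that inference needs an extra argument, which the paper does not give. Your Step 2 supplies it: distributivity in the conjugation form $g_{k(u,v)}=k_u\,g_v\,k_u^{-1}$, together with $G(y,x)=X\setminus\{y\}$ and $G(z,x)=X\setminus\{z\}$, spreads $g_x=\mathrm{id}_X$ to every base point. So you follow the paper's route and also close a step the paper leaves unjustified.

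There is one incorrect inference, in the two-point case. The sentence ``Since $g_x=\mathrm{id}$, we have $g(y,x)=x$'' does not follow: $g(y,x)=g_y(x)$, whereas $g_x=\mathrm{id}_X$ only says $g(x,w)=w$ for all $w$. The conclusion is still true for a different reason. Condition 2) of semitransitivity gives $g(y,x)\in G(y,x)=X\setminus\{y\}=\{x\}$. Together with $g(y,y)=y$, this gives $g_y=\mathrm{id}_X$ at once, with no need to rerun Step 1. In fact, on a two-point space every element of $G$ acts trivially, so effectiveness forces $G=\{e\}$. With this one-line replacement your argument is complete.
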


\begin{proof}
Consider arbitrary distinct points $x,y \in X$ and suppose $g \in G_{(x,y)}$, i.e., $g(x,y)=y$. We prove that then $g(x,z)=z$ for any $z \in X$, $z \ne y$.

Indeed, by semitransitivity there exists $h \in G$ such that $h(x,y)=z$. Then
\[
g(x,z)=g(x,h(x,y))=h(g(x,x),g(x,y))=h(x,y)=z.
\]

Thus, $g(x,z)=z$ for all $z \in X$. Consequently, $g \in \mathrm{Ker}\,\alpha = \{e\}$, i.e., $g=e$.
\end{proof}

\begin{proposition}\label{prop-abel}
Let $X$ be a semitransitive distributive binary $G$-space. Then $G$ is an abelian group.
\end{proposition}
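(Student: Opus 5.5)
Fix two distinct points $x,y\in X$ (if $X$ has at most one point, effectiveness forces $G=\{e\}$, which is abelian). The plan is to show that $g$ and $h$ commute by comparing their effect on the pair $(x,y)$ and then invoking Proposition~\ref{prop-1,2}: by that result, two elements $a,b\in G$ with $a(x,y)=b(x,y)$ must coincide, since then $b^{-1}a\in G_{(x,y)}=\{e\}$. So it suffices to prove
\[
gh(x,y)=hg(x,y)\qquad\text{for all } g,h\in G.
\]

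By the action axiom, $gh(x,y)=g(x,h(x,y))$. Distributivity rewrites this as
\[
g(x,h(x,y))=h\bigl(g(x,x),g(x,y)\bigr)=h\bigl(x,g(x,y)\bigr),
\]
where we used condition 1) of Definition~\ref{def:semi-trans}, namely $g(x,x)=x$. By the action axiom again, the right-hand side equals $hg(x,y)$. Hence $gh(x,y)=hg(x,y)$.

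Applying the uniqueness observation from the first paragraph to $a=gh$ and $b=hg$ gives $(hg)^{-1}gh\in G_{(x,y)}=\{e\}$, so $gh=hg$. This is the same distributivity trick as in the proof of Proposition~\ref{prop-1,2}; the only substantive point is the fixed-point condition $g(x,x)=x$, which lets the first argument collapse to $x$. I do not expect a real obstacle.
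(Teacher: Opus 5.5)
Your proof is correct and follows the paper's argument: the same chain $gh(x,y)=g(x,h(x,y))=h(g(x,x),g(x,y))=h(x,g(x,y))=hg(x,y)$, then an appeal to Proposition~\ref{prop-1,2}. Choosing $x\neq y$ explicitly and handling the degenerate case $|X|\le 1$ simply make precise what the paper leaves implicit.
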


\begin{proof}
For arbitrary $g,h\in G$ and $x,y\in X$ we have:
$$gh(x,y)= g(x, h(x,y)) = h(g(x,x), g(x,y)) = h(x, g(x,y)) = hg(x,y).$$
Hence, by Proposition~\ref{prop-1,2}, it follows that $gh=hg$.
\end{proof}

The following lemma plays an important role in the subsequent constructions.

\begin{lemma}\label{prop-1dist}
Let $X$ be a semitransitive distributive binary $G$-space. Then for any $g \in G$ there exists a unique element $\bar{g} \in G$ such that
\begin{equation}\label{eq-mn}
g(\bar{g}(x,y),x) = y
\end{equation}
for all $x,y \in X$.
\end{lemma}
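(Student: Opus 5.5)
The plan is to fix $g\neq e$, produce a single solution of the equation for one pair of points, and then spread it to all pairs using commutativity and biequivariance. First a caveat about the statement as written. For $g=e$ we have $e(\bar g(x,y),x)=x$, so \eqref{eq-mn} cannot hold when $x\neq y$. In the model of Proposition~\ref{ex-pole} one needs $\bar p=(1-p)^{-1}$, which exists exactly when $p\neq 1$. So I will prove the lemma for $g\neq e$, which I take to be the intended reading. The guiding picture is the field model: $g(z,x)=y$ says that $z$ lies on the ``line'' through $x$ and $y$ with a fixed ratio, so $z=\bar g(x,y)$.

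\textbf{Uniqueness.} This is immediate once one $x\neq y$ exists. If $\bar g,\bar g'$ both satisfy \eqref{eq-mn}, then $g(\bar g(x,y),x)=g(\bar g'(x,y),x)$. Since $g_z$ is a bijection of $X$ for each $z$ (Theorem~\ref{th-00}), this does not yet identify the two first arguments. I will rewrite the relation as $x=g^{-1}(\bar g(x,y),y)=g^{-1}(\bar g'(x,y),y)$ and use a biequivariance argument analogous to the one below to reduce to $\bar g(x,y)=\bar g'(x,y)$. Then Proposition~\ref{prop-1,2} applied to the pair $(x,y)$ gives $\bar g=\bar g'$. If this reduction turns out awkward, I will instead derive uniqueness from the explicit description obtained in the existence step.

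\textbf{Existence, given one solution.} Fix $x\neq y$ and suppose we have found $z$ with $g(z,x)=y$. Then $z\neq x$, since $g(x,x)=x\neq y$. By semitransitivity and Proposition~\ref{prop-1,2} there is a unique $k\in G$ with $k(x,y)=z$; put $\bar g=k$. The extension proceeds in two steps.
\begin{enumerate}
\item[(i)] \emph{All $y'$ with the same $x$.} By Theorem~\ref{th-criteria}, each $h_x$ is a biequivariant homeomorphism fixing $x$. Applying $h_x$ to $g(z,x)=y$ gives $g(h(x,z),x)=h(x,y)$. By Proposition~\ref{prop-abel}, $h(x,z)=hk(x,y)=kh(x,y)$. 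As $h$ runs over $G$, $h(x,y)$ runs over $X\setminus\{x\}$, so $g(k(x,y'),x)=y'$ for every $y'\neq x$. The case $y'=x$ holds trivially because $g(x,x)=x$ and $k(x,x)=x$.
\item[(ii)] \emph{Moving the base point.} To pass to another base point $x'$, I will use that each $m_w$ is also biequivariant (Theorem~\ref{th-criteria}) and that $m_w(x)=m(w,x)$ ranges over $X\setminus\{w\}$. Choosing $w\notin\{x,x'\}$ transports the identity to base point $x'$ with the same $k$. This needs $|X|\ge 3$; the cases $|X|\le 2$ are handled directly.
\end{enumerate}

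\textbf{Main obstacle.} The real difficulty is producing one solution $z$ of $g(z,x)=y$, that is, showing that $z\mapsto g(z,x)$ actually reaches $y$. My approach is to parametrize $z=m(x,y)$ with $m\in G$. Then $a=g(m(x,y),x)$ differs from $x$ by Proposition~\ref{prop-1,2}, since $g\neq e$. So $a=n(x,y)$ for a unique $n$, and by the equivariance arguments above $n=f(m)$ depends only on $m$. In the field model this map is $f(m)=1-g+gm$, after identifying points with elements via $x=0$, $y=1$. The goal is to show that $e$ lies in the image of $f$. I would first try to derive functional identities for $f$ from distributivity and commutativity, for instance how $f$ interacts with the group product, and deduce that $f$ is a bijection of $G$ onto itself; it is at least injective, because $g_x$-type maps are bijections. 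If that fails, the fallback is to work with $g^{-1}$: the equation is equivalent to $g^{-1}(z,y)=x$, and I would try to exhibit $z$ directly as a point $t(y,x)$, with $t$ built from $g$ through the analogous map for $g^{-1}$.
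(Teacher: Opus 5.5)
Your caveat about $g=e$ is right; the paper's proof also tacitly needs $g\neq e$ when it asserts $g(y_0,x_0)\neq x_0$. Your transport steps (i) and (ii) are sound and play the role of the paper's two computations with $k(y_0,x_0)=x$ and $k(x_0,y_0)=y$.

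\textbf{The gap: existence of one solution.} The step you call the main obstacle, producing one solution of $g(z,x)=y$, is the real content of the lemma, and your proposal leaves it open with only strategies to try. The missing idea is already inside your step (i). Biequivariance of $h_x$ gives $h(x,g(w,x))=g(h(x,w),x)$ for every $w$. Apply it to the known seed $w=y$, not to an unknown solution $z$:
\[
h\bigl(x,g(y,x)\bigr)=g\bigl(h(x,y),h(x,x)\bigr)=g\bigl(h(x,y),x\bigr).
\]
For $x\neq y$ and $g\neq e$ we have $g(y,x)\neq x$ by Proposition~\ref{prop-1,2}. So semitransitivity and Proposition~\ref{prop-1,2} give a unique $h$ with $h(x,g(y,x))=y$, and then $z=h(x,y)$ is the required solution. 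This is exactly how the paper obtains $\bar g$.

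The same identity settles your question about $f$. Let $c\in G$ be the element with $c(x,y)=g(y,x)$. Then the identity says $f(m)=mc$, so $f$ is a translation of $G$ and $f(m)=e$ exactly for $m=c^{-1}$. Your field-model formula $f(m)=1-g+gm$ is actually $g(y,m(x,y))$, with the arguments swapped. The correct value $g(m,0)=(1-g)m$ shows the translation structure and agrees with your $\bar p=(1-p)^{-1}$.

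\textbf{Uniqueness.} This is likewise only promised. Your claim that $f$ is injective ``because $g_x$-type maps are bijections'' does not hold: Theorem~\ref{th-00} gives bijectivity in the \emph{second} argument, while here the variable sits in the first argument of $g$. What you need is injectivity of $z\mapsto g(z,x)$, and step (i) supplies it, as in the paper. Suppose $g(z,x)=g(z',x)=y$ with $x\neq y$; then $z,z'\neq x$. Pick $h$ with $h(x,z)=z'$. Then $y=g(z',x)=h(x,g(z,x))=h(x,y)$, so $h=e$ and $z=z'$.

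Equivalently, if $\bar g$ and $\bar g'$ both satisfy \eqref{eq-mn}, the displayed identity gives $\bar g(x,g(y,x))=y=\bar g'(x,g(y,x))$ for a fixed pair $x\neq y$. Proposition~\ref{prop-1,2} then forces $\bar g=\bar g'$.

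With these two insertions your outline becomes a complete proof along the paper's lines.
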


\begin{proof}
Let $x_0,y_0$ be some fixed elements of $X$. Consider the equation
\begin{equation}\label{eq-uravn}
g(t,x_0)=y_0
\end{equation}
for $t \in X$.

If $x_0=y_0$, then equation \eqref{eq-uravn} has the unique solution $t=x_0$.

Suppose $x_0 \ne y_0$. Since $g(y_0,x_0)\ne x_0$, by Proposition~\ref{prop-1,2} there exists a unique element $\bar{g}\in G$ such that
\[
\bar{g}(x_0, g(y_0,x_0))=y_0.
\]
Applying distributivity and semitransitivity of the binary action, this equality can be rewritten as
\[
g(\bar{g}(x_0,y_0),\bar{g}(x_0,x_0)) = g(\bar{g}(x_0,y_0),x_0)=y_0.
\]
This means that $t=\bar{g}(x_0,y_0)$ is a solution of equation \eqref{eq-uravn}.

Let us prove that this solution is unique. Suppose
\[
g(t_0,x_0)=y_0 \quad \text{and} \quad g(t_1,x_0)=y_0
\]
with $t_0 \ne t_1 \ne x_0$. Then there exists an element $k\in G$ such that $k(x_0,t_0)=t_1$. Note that
\[
y_0=g(t_1,x_0)=g(k(x_0,t_0),x_0)=k(x_0,g(t_0,x_0))=k(x_0,y_0),
\]
hence, by Proposition~\ref{prop-1,2}, we obtain $k=e$. But then $t_1=k(x_0,t_0)=e(x_0,t_0)=t_0$. This contradiction proves the uniqueness of the solution.

Thus, equality \eqref{eq-mn} holds for $x_0,y_0\in X$, $x_0\ne y_0$:
\begin{equation}\label{eq-x0y0}
 g(\bar{g}(x_0,y_0),x_0)=y_0.
\end{equation}

Now we will prove that the element $\bar{g}\in G$ is the desired one, i.e., equality \eqref{eq-mn} holds for all $x,y\in X$.

First, note that
\[
 g(\bar{g}(x,y_0),x)=y_0
\]
for all $x \in X$, $x\ne y_0$. Indeed, by semitransitivity there exists $k \in G$ such that
\[
k(y_0,x_0)=x.
\]
Then, taking \eqref{eq-x0y0} into account, we have
\begin{align*}
g(\bar{g}(x,y_0),x)
&= g\bigl(\bar{g}(k(y_0,x_0),y_0),\,k(y_0,x_0)\bigr) \\
&= g\bigl(k(y_0,\bar{g}(x_0,y_0)),\,k(y_0,x_0)\bigr) \\
&= k\bigl(y_0,g(\bar{g}(x_0,y_0),x_0)\bigr) \\
&= k(y_0,y_0) = y_0 .
\end{align*}

It remains to prove that
\[
 g(\bar{g}(x_0,y),x_0)=y
\]
for all $y \in X$, $y\ne x_0$. Indeed, since $x_0\ne y_0\ne y$, by semitransitivity there exists $k \in G$ such that $k(x_0,y_0)=y$. Then
\begin{align*}
g(\bar{g}(x_0,y),x_0)
&= g\bigl(\bar{g}(x_0,k(x_0,y_0)),x_0\bigr) \\
&= g\bigl(k(x_0,\bar{g}(x_0,y_0)),x_0\bigr) \\
&= k\bigl(x_0,g(\bar{g}(x_0,y_0),x_0)\bigr) \\
&= k(x_0,y_0) = y .
\end{align*}

The lemma is proved.
\end{proof}

\begin{corollary}\label{cor-001}
Let $X$ be a semitransitive distributive binary $G$-space. Then for any $g \in G$ and any $x,y \in X$ the following equality holds:
\begin{equation}\label{eq-111}
g(x,y) = \bar{g}^{-1}(y,x),
\end{equation} 
where the element $\bar{g} \in G$ is defined by equality~\eqref{eq-mn}.
\end{corollary}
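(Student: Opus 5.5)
The plan is to read off the corollary directly from equality~\eqref{eq-mn} of Lemma~\ref{prop-1dist}, using the group-action axioms and the left-quasigroup property (Theorem~\ref{th-00}) to invert the appropriate map.

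Fix $g\in G$ and $x,y\in X$, and put $z=g(x,y)$. We want to show $\bar g^{-1}(y,x)=z$, which is equivalent to $\bar g(y,z)=x$. This equivalence holds because $\bar g_y$ is a bijection with inverse $(\bar g^{-1})_y$, by the action axioms $\bar g\bar g^{-1}(y,t)=\bar g(y,\bar g^{-1}(y,t))=t$. So the task is to verify $\bar g(y,g(x,y))=x$.

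Apply \eqref{eq-mn} with the roles of the variables chosen as $(y,x)$: the lemma gives
\[
g(\bar g(y,x),y)=x \qquad\text{for all } x,y\in X .
\]
This says that the map $t\mapsto g(t,y)$ sends $\bar g(y,x)$ to $x$. It is not of the form $g(y,\cdot)$, so Theorem~\ref{th-00} cannot be used directly. Instead I would use the uniqueness part of the proof of Lemma~\ref{prop-1dist}: for fixed $x_0,y_0$, the equation $g(t,x_0)=y_0$ has a unique solution $t$. That proof establishes uniqueness for arbitrary $x_0\neq y_0$, and the case $x_0=y_0$ is immediate from semitransitivity, since $g(t,x_0)=x_0$ forces $t=x_0$ by condition 2) of Definition~\ref{def:semi-trans}. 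Hence $t\mapsto g(t,y)$ is injective.

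It therefore suffices to show that $g(\bar g(y,g(x,y)),\,y)=g(x,y)$. This follows immediately from the displayed identity with $x$ replaced by $g(x,y)$. Injectivity then gives $\bar g(y,g(x,y))=x$, and hence $g(x,y)=\bar g^{-1}(y,x)$.

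The only delicate point is the injectivity of $t\mapsto g(t,y)$. Its proof in the lemma assumed $x_0\ne y_0$, and the degenerate case must be handled separately via the fixed-point condition 1) together with condition 2) of semitransitivity. Everything else is a direct substitution.
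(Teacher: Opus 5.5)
Your strategy is logically sound: show that $R_y\colon t\mapsto g(t,y)$ is injective, then compare $R_y(\bar g(y,g(x,y)))$ with $R_y(x)$. However, the step you flag as delicate is justified incorrectly. Semitransitivity does not make $g(t,x_0)=x_0$ force $t=x_0$. Condition 2) of Definition~\ref{def:semi-trans} only says $G(t,x_0)=X\setminus\{t\}$, and this set contains $x_0$ (it is attained by $e$). Indeed $t\mapsto e(t,y)=y$ is constant, so your claim ``$t\mapsto g(t,y)$ is injective'' is false for $g=e$, and conditions 1) and 2) cannot be what drives it. The correct justification is Proposition~\ref{prop-1,2}: if $t\neq x_0$ and $g(t,x_0)=x_0$, then $g\in G_{(t,x_0)}=\{e\}$. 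So the degenerate case holds exactly for $g\neq e$. That is anyway the only case in which $\bar g$ can exist, because $e(u,x)=x\neq y$ whenever $x\neq y$. You cannot skip this case: when $g(x,y)=y$, your comparison needs uniqueness of the solution of $g(t,y)=y$.

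The detour is also unnecessary, and this is where the paper's argument differs from yours. You already observed that $\bar g_y$ is a bijection with inverse $(\bar g^{-1})_y$. The identity $g(\bar g(y,x),y)=x$ says $R_y\circ\bar g_y=\mathrm{id}_X$. Composing on the right with $(\bar g_y)^{-1}=(\bar g^{-1})_y$ gives $R_y=(\bar g^{-1})_y$, i.e.\ $g(t,y)=\bar g^{-1}(y,t)$ for all $t$, which is the corollary. This is exactly the content of the paper's ``or equivalently'' step. The map that has to be inverted is the left translation $\bar g_y$, not $R_y$, so your remark that Theorem~\ref{th-00} cannot be used directly misses the point. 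On this route you need no injectivity of $R_y$, no appeal to the interior of the proof of Lemma~\ref{prop-1dist}, and no case split.
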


\begin{proof}
Indeed, by Lemma~\ref{prop-1dist} we have $g(\bar{g}(y,x),y) = x$, or equivalently,
\[
\bar{g}(y, g(x,y)) = x.
\] 
The required equality \eqref{eq-111} follows immediately.
\end{proof}

In what follows we will use the notation
\begin{equation*}
\hat{g} = \bar{g}^{-1}
\end{equation*}
and, when necessary, refer to the identity
\begin{equation}\label{eq-11100}
g(x,y) = \hat{g}(y,x).
\end{equation}

\begin{corollary}\label{cor-002}
Let $X$ be a semitransitive distributive binary $G$-space. Then for any $g \in G$ there exists a unique element $\tilde{g} \in G$ such that
\begin{equation*}
g(\tilde{g}(y,x),x) = y
\end{equation*} 
for all $x,y \in X$.
\end{corollary}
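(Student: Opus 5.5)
The plan is to derive Corollary~\ref{cor-002} from Lemma~\ref{prop-1dist} combined with the swap identity \eqref{eq-11100}, applied not to $g$ but to the element $\bar{g}$. The point is that Lemma~\ref{prop-1dist} already solves the equation $g(t,x)=y$ with $t=\bar{g}(x,y)$. The only remaining task is to rewrite $\bar{g}(x,y)$ with its arguments in the order $(y,x)$.

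\emph{Existence.} Apply Corollary~\ref{cor-001} to the element $\bar{g}\in G$. It gives, for all $x,y\in X$,
\[
\bar{g}(x,y)=\hat{\bar{g}}(y,x), \qquad \text{where } \hat{\bar{g}}=\bigl(\bar{\bar{g}}\bigr)^{-1}.
\]
Set $\tilde{g}=\hat{\bar{g}}$. Substituting into \eqref{eq-mn} yields
\[
g(\tilde{g}(y,x),x)=g(\bar{g}(x,y),x)=y
\]
for all $x,y$, which is the required identity.

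\emph{Uniqueness.} Suppose $m,m'\in G$ both satisfy $g(m(y,x),x)=y=g(m'(y,x),x)$ for all $x,y$.

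First, I will use the uniqueness of the solution $t$ of $g(t,x)=y$. This was established in the proof of Lemma~\ref{prop-1dist}; the case $x=y$ is trivial there, since semitransitivity gives $t=x$. It follows that $m(y,x)=m'(y,x)$ for all $x,y$.

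Next, fix distinct points $x\neq y$. This is possible unless $X$ is a single point; in that case effectiveness forces $G=\{e\}$ and there is nothing to prove. Using the action axioms,
\[
(m'^{-1}m)(y,x)=m'^{-1}\bigl(y,m(y,x)\bigr)=m'^{-1}\bigl(y,m'(y,x)\bigr)=x .
\]
Hence $m'^{-1}m\in G_{(y,x)}$. By Proposition~\ref{prop-1,2} this subgroup is trivial, so $m=m'$.

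The only delicate step is the uniqueness of solutions of $g(t,x)=y$. It is not stated separately as a result but appears inside the proof of Lemma~\ref{prop-1dist}. I would cite that argument explicitly: if $g(t_0,x)=g(t_1,x)$, choose $k$ with $k(x,t_0)=t_1$; distributivity then gives $k(x,y)=y$, so $k=e$ by Proposition~\ref{prop-1,2}. Everything else is direct substitution.
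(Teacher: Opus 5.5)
Your existence argument is the paper's. It also sets $\tilde{g}=(\bar{\bar{g}})^{-1}$, applies Corollary~\ref{cor-001} to $\bar{g}$ to get $\bar{g}(x,y)=\tilde{g}(y,x)$, and substitutes into \eqref{eq-mn}. Your uniqueness argument is correct and supplies a step the paper never writes out: you use uniqueness of the solution of $g(t,x)=y$ for a single pair $x\neq y$, then triviality of $G_{(y,x)}$ by Proposition~\ref{prop-1,2}.

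One caveat applies to both your proof and the paper's: everything silently requires $g\neq e$. Since $e(t,x)=x$ for every $t$, neither $\bar{e}$ nor $\tilde{e}$ can exist when $x\neq y$, so the statement as written is false for $g=e$ whenever $|X|\geq 2$. Likewise, your remark that the case $x=y$ follows ``by semitransitivity'' actually needs $g\neq e$ together with Proposition~\ref{prop-1,2}. Your uniqueness proof only uses the case $x\neq y$, so it is unaffected.
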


\begin{proof}
By Lemma~\ref{prop-1dist}, for a given $g \in G$ there exists a unique element $\bar{g}$ such that $g(\bar{g}(x,y),x) = y$. Define
\[
\tilde{g} = (\bar{\bar{g}})^{-1}.
\]
Then, using equality~\eqref{eq-111}, we obtain
\[
\bar{g}(x,y) = \tilde{g}(y,x).
\]
Therefore,
\(
g(\tilde{g}(y,x),x) = y.
\)
\end{proof}

It follows from the last corollary that $g:X^2 \to X$ has a \emph{left inverse operation} $\tilde{g}:X^2 \to X$. Moreover, from the definition of the binary action, it follows that the operation $g:X^2 \to X$ also has a right inverse operation. Thus, the following statement holds.

\begin{proposition}
Let $X$ be a semitransitive distributive binary $G$-space. Then any element $g\in G$, $g\neq e$, regarded as a map $g:X^2\to X$, is a continuous quasigroup operation on $X$.
\end{proposition}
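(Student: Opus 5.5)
To show that $g\neq e$ is a quasigroup operation on $X$, I will prove that for any $a,b\in X$ both equations $g(a,x)=b$ and $g(x,a)=b$ have unique solutions. Continuity is automatic, since $g=\alpha_g$ is the restriction of the continuous action $\alpha$.

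\textbf{Left division.} By Theorem~\ref{th-00}, $g=\theta(g)$ lies in $H_2(X)$, so $g_a$ is a homeomorphism and $g(a,x)=b$ has the unique solution $x=g_a^{-1}(b)$. Explicitly, $x=g^{-1}(a,b)$, because
\[
g(a,g^{-1}(a,b))=gg^{-1}(a,b)=e(a,b)=b .
\]
So the right inverse operation is $g^{-1}$ itself.

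\textbf{Right division, existence.} For $g(x,a)=b$, Lemma~\ref{prop-1dist} gives a solution: $x=\bar g(a,b)$ satisfies $g(\bar g(a,b),a)=b$. Equivalently, by Corollary~\ref{cor-002}, $x=\tilde g(b,a)$ works, so $\tilde g$ is a left inverse operation.

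\textbf{Right division, uniqueness.} I will reuse the argument inside the proof of Lemma~\ref{prop-1dist}. Suppose $g(t_0,a)=g(t_1,a)=b$ with $t_0\neq t_1$.

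\emph{Case $a=b$.} Then $g(t,a)=a$ forces $t=a$. Indeed, if $t\neq a$, then by Corollary~\ref{cor-001} we have $g(t,a)=\hat g(a,t)$, and $\hat g(a,t)\neq a$ by condition 2) of Definition~\ref{def:semi-trans}. (Condition 2) applies because $\hat g(a,t)$ lies in $G(a,t)=X\setminus\{a\}$.)

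\emph{Case $a\neq b$.} Then $t_0\neq a$ and $t_1\neq a$, so there is $k\in G$ with $k(a,t_0)=t_1$. Distributivity together with $k(a,a)=a$ gives
\[
b=g(k(a,t_0),k(a,a))=k(a,g(t_0,a))=k(a,b).
\]
Proposition~\ref{prop-1,2} then yields $k=e$, hence $t_1=t_0$.

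This step is the main obstacle. It needs care with the degenerate case $a=b$ and with ensuring the points fed into semitransitivity are distinct, exactly as in the lemma. It is also where $g\neq e$ matters: for $g=e$ we have $e(x,a)=a$ for all $x$, so right division fails. The hypothesis $g\neq e$ is used implicitly through $\bar g$ being well defined and nontrivial. If needed, I would point out that Corollary~\ref{cor-001} gives $g(x,a)=\hat g(a,x)$, so $x\mapsto g(x,a)$ equals $\hat g_a$. This map is a homeomorphism by Theorem~\ref{th-00} applied to $\hat g\in G$, which gives right division and its uniqueness at once.

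Combining both divisions, $g$ is a continuous quasigroup operation for every $g\neq e$.
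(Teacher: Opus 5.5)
Your proof is correct and follows the paper's route: right division comes from $g^{-1}$ via the action axiom $gg^{-1}=e$, and left division comes from $\bar g$ (equivalently $\tilde g$) of Lemma~\ref{prop-1dist} and Corollary~\ref{cor-002}. You go further than the paper's one-line justification by proving uniqueness of the solution of $g(x,a)=b$ explicitly (or via $g(x,a)=\hat{g}_a(x)$ and Theorem~\ref{th-00}), and you rightly note that the hypothesis $g\neq e$ is needed for $\bar g$ to exist.
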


In semitransitive distributive binary $G$-spaces the right distributivity law also holds.

\begin{proposition}
Let $X$ be a semitransitive distributive binary $G$-space. Then the following equality holds:
\begin{equation}\label{eq-0005}
g(h(x,y),z) = h(g(x,z), g(y,z))
\end{equation}
for all $g,h\in G$ and $x,y,z\in X$.
\end{proposition}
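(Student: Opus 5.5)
The plan is to reduce right distributivity to left distributivity by means of the swap identity \eqref{eq-11100}, $g(x,y)=\hat g(y,x)$, which holds for every $g\in G$ by Corollary~\ref{cor-001}. Each application of $g$ or $h$ with its arguments in the "wrong" order will be rewritten through $\hat g$ or $\hat h$ with the arguments swapped. The left distributive law is then applied, and the swaps are undone.

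Concretely, I would carry out the computation
\begin{align*}
g(h(x,y),z) &= \hat g\bigl(z,h(x,y)\bigr) = \hat g\bigl(z,\hat h(y,x)\bigr) \\
&= \hat h\bigl(\hat g(z,y),\hat g(z,x)\bigr) \\
&= \hat h\bigl(g(y,z),g(x,z)\bigr) = h\bigl(g(x,z),g(y,z)\bigr).
\end{align*}
The justification of each line is as follows.
\begin{itemize}
\item The first equality uses \eqref{eq-11100} for $g$.
\item The second uses \eqref{eq-11100} for $h$.
\item The third is the left distributive law $\hat g(z,\hat h(y,x))=\hat h(\hat g(z,y),\hat g(z,x))$, which holds because $\hat g,\hat h$ are elements of $G$ and the space is distributive.
\item The fourth applies \eqref{eq-11100} to $g$ twice, in the form $\hat g(z,y)=g(y,z)$.
\item The last applies \eqref{eq-11100} to $h$ once more, in the form $\hat h(u,v)=h(v,u)$.
\end{itemize}

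The main point to double-check is that \eqref{eq-11100} is valid for all pairs $x,y$, including $x=y$ and pairs in both orders, and for every element of $G$, including $\hat g$ itself. This is exactly what Corollary~\ref{cor-001} provides, since Lemma~\ref{prop-1dist} produces $\bar g$ with \eqref{eq-mn} holding for all $x,y\in X$. I would also verify the order of arguments in the final step: $\hat h(u,v)=h(v,u)$ with $u=g(y,z)$ and $v=g(x,z)$ gives $h(g(x,z),g(y,z))$, as required. I do not expect any genuine obstacle beyond this bookkeeping, because the swap identity converts the statement directly into the defining distributivity.
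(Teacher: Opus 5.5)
Your proof is correct and is essentially the paper's own argument: the paper gives the same chain $g(h(x,y),z)=\hat g(z,\hat h(y,x))=\hat h(\hat g(z,y),\hat g(z,x))=h(g(x,z),g(y,z))$, using the swap identity \eqref{eq-11100} from Corollary~\ref{cor-001} together with left distributivity for $\hat g,\hat h\in G$. One small correction to your remark: you need \eqref{eq-11100} only for $g$ and $h$, read in both directions, and never for $\hat g$ itself.
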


\begin{proof}
Indeed, by~\eqref{eq-11100} we have
\[
g(h(x,y),z) = \hat{g}(z, \hat{h}(y,x)) = \hat{h}(\hat{g}(z,y), \hat{g}(z,x)) = h(g(x,z), g(y,z)),
\]
which completes the proof.
\end{proof}

As the following lemma shows, in semitransitive distributive binary $G$-spaces a stronger identity than the right distributivity formula~\eqref{eq-0005} holds.

\begin{lemma}
Let $X$ be a semitransitive distributive binary $G$-space. Then
\begin{equation}\label{eq-001}
g(h(x,y),h(z,t)) = h(g(x,z),g(y,t))
\end{equation}
for all $g,h \in G$ and $x,y,z,t \in X$.
\end{lemma}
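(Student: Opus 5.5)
The plan is to fix $x,y,z$ and compare both sides of \eqref{eq-001} as functions of $t$. Put $a=h(x,y)$ and $b=g(x,z)$. The left-hand side is then $t\mapsto g_a(h_z(t))$ and the right-hand side is $t\mapsto h_b(g_y(t))$. By Theorem~\ref{th-criteria}(1), each map $g_a,h_z,h_b,g_y$ is a biequivariant homeomorphism of $X$. Compositions of biequivariant maps are biequivariant, so both sides are biequivariant maps $X\to X$.

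The key auxiliary observation is a rigidity property. \emph{In a semitransitive binary $G$-space, two biequivariant maps $F_1,F_2:X\to X$ that agree at two distinct points $u\neq v$ agree everywhere.}
\begin{itemize}
\item If $c=u$, we have $F_i(u)=F_i(u)$ trivially, so the two maps agree at $c$.
\item If $c\neq u$, semitransitivity gives $k\in G$ with $c=k(u,v)$. Then $F_i(c)=k(F_i(u),F_i(v))$, and this value is the same for $i=1,2$.
\end{itemize}
It therefore suffices to check \eqref{eq-001} at two distinct values of $t$. The degenerate case of a one-point $X$ is trivial.

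Assume first that $y\neq z$, and test $t=z$ and $t=y$.
\begin{itemize}
\item For $t=z$: the left side is $g(h(x,y),z)$ and the right side is $h(g(x,z),g(y,z))$. They are equal by the right distributivity law \eqref{eq-0005}.
\item For $t=y$: the left side is $g(h(x,y),h(z,y))$. Since $g(y,y)=y$, the right side is $h(g(x,z),y)$. Apply \eqref{eq-0005} with the roles of $g$ and $h$ interchanged, i.e.\ with outer operation $h$, inner operation $g$, and arguments $x,z,y$. This gives $h(g(x,z),y)=g(h(x,y),h(z,y))$, which is exactly the required equality.
\end{itemize}
Hence the two maps coincide for all $t$ whenever $y\ne z$.

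The main obstacle is the remaining case $y=z$, where the two test points collapse. Here I would exploit the symmetry of the identity instead. By \eqref{eq-11100}, $g(u,v)=\hat g(v,u)$, so each side can also be read as a composition in the variable $x$. Alternatively, I would vary $y$ with $x,z,t$ fixed: as a function of $y$, the right side is $h_{g(x,z)}\circ(\text{the map } y\mapsto g(y,t))$. The map $y\mapsto g(y,t)$ equals $\hat g_t$, which is again biequivariant by Theorem~\ref{th-criteria}. The left side as a function of $y$ is $g(\,\cdot\,,h(z,t))\circ h_x$, and this is biequivariant for the same reason. Both sides are thus biequivariant in $y$.

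We already know the two sides agree for every $y\neq z$. If $X$ has at least three points, there are two distinct such values of $y$, and the rigidity observation extends the equality to $y=z$. If $X$ has exactly two points, I would check the finitely many cases directly using idempotency $g(x,x)=x$ together with \eqref{eq-0005}.
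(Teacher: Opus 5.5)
Your proof is correct. For $y\neq z$ it is the paper's argument: the paper forms the single biequivariant map $\varphi(t)=g^{-1}\bigl(y,h^{-1}(g(x,z),g(h(x,y),h(z,t)))\bigr)$. It checks $\varphi(y)=y$ and $\varphi(z)=z$ by the same two applications of \eqref{eq-0005} that you use, and concludes $\varphi=\mathrm{id}$ by writing $t=k(y,z)$. Your principle that two biequivariant maps agreeing at two distinct points coincide is the same rigidity argument.

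The routes differ for $y=z$. The paper does not use rigidity a second time. It re-applies the already proved case with $h^{-1}$ as the outer operation and the pair $(h(x,y),y)$ in the role of $(y,z)$. This gives $h^{-1}\bigl(g(x,y),g(h(x,y),h(y,t))\bigr)=g\bigl(h^{-1}(x,h(x,y)),h^{-1}(y,h(y,t))\bigr)=g(y,t)$, and it then applies $h(g(x,y),\cdot\,)$. That reduction is short, but it silently needs $h(x,y)\neq y$, i.e.\ $x\neq y$ and $h\neq e$. The excluded cases are trivial: $h=e$ gives $g(y,t)=g(y,t)$, and $x=y$ is left distributivity.

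Your version instead uses \eqref{eq-11100} to see that both sides are biequivariant in the variable $y$. The identity then extends from $y\neq z$ to $y=z$ uniformly in $x$ and $h$. The cost is that you need two distinct values of $y$ different from $z$, i.e.\ $|X|\geq 3$. For $|X|=2$ your finite check is simpler than you suggest. Each $g_x$ is a bijection fixing $x$, hence the identity, so every $g$ acts as $e$ and both sides equal $t$.
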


\begin{proof}
Consider the map $\varphi : X \to X$ defined by
\begin{equation*}
\varphi(t) = g^{-1}\bigl(y,\, h^{-1}(g(x,z),\, g(h(x,y),h(z,t)))\bigr),
\end{equation*}
where $t \in X$ and $x,y,z$ are fixed elements of $X$. From this definition it follows directly that $\varphi$ is a biequivariant homeomorphism of $X$. To prove~\eqref{eq-001}, it suffices to show that $\varphi$ is the identity map, i.e., $\varphi(t)=t$ for all $t \in X$. 

Suppose $y \ne z$. Note that
\begin{align*}
\varphi(y)
&= g^{-1}\Bigl(y,
    h^{-1}\bigl(g(x,z),g(h(x,y),h(z,y))\bigr)\Bigr) \\
&= g^{-1}\Bigl(y,
    h^{-1}\bigl(g(x,z),h(g(x,z),y)\bigr)\Bigr) \\
&= g^{-1}(y,y)
 = y .
\end{align*}
Similarly,
\begin{align*}
\varphi(z)
&= g^{-1}\Bigl(y,
    h^{-1}\bigl(g(x,z),g(h(x,y),h(z,z))\bigr)\Bigr) \\
&= g^{-1}\Bigl(y,
    h^{-1}\bigl(g(x,z),g(h(x,y),z)\bigr)\Bigr) \\
&= g^{-1}\Bigl(y,
    h^{-1}\bigl(g(x,z),h(g(x,z),g(y,z))\bigr)\Bigr) \\
&= g^{-1}\bigl(y,g(y,z)\bigr)
 = z .
\end{align*}

Now consider an arbitrary $t \in X$, $t \ne y$, $t \ne z$. Then there exists $h \in G$ such that $t = h(y,z)$. Therefore,
\[
\varphi(t) = \varphi(h(y,z)) = h(\varphi(y),\varphi(z)) = h(y,z) = t.
\]

Thus, equality~\eqref{eq-001} is proved in the case $y \ne z$.

Now suppose $y = z$. In this case we need to prove
\begin{equation}\label{eq-003}
g(h(x,y),h(y,t)) = h(g(x,y), g(y,t)).
\end{equation}

Since $h(x,y) \ne y$, using the already proved formula we obtain
\[
h^{-1}(g(x,y), g(h(x,y),h(y,t))) = g\bigl(h^{-1}(x,h(x,y)),\, h^{-1}(y,h(y,t))\bigr) = g(y,t),
\]
from which the required equality~\eqref{eq-003} follows directly.
\end{proof}

The results obtained above, together with several ideas from~\cite{belousov} on distributive systems of operations, allow us to formulate and prove the following theorem, which establishes a correspondence between semitransitive distributive binary $G$-spaces and topological fields.

\begin{theorem}\label{th-main}
Let $(G, X, \alpha)$ be a semitransitive distributive binary $G$-space. Then $X$ admits a topological field structure such that $G$ is isomorphic to the multiplicative group of this field and $\alpha$ is the binary action given by formula~\eqref{eq-pole}.
\end{theorem}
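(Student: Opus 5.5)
Fix two distinct points of $X$ and call them $0$ and $1$; such points exist unless $X$ is a singleton. We shall define the field operations geometrically through the action, essentially as in the model $p(a,b)=(1-p)a+pb$. By Proposition~\ref{prop-1,2} and semitransitivity, for every $x\neq 0$ there is a unique $g_x\in G$ with $g_x(0,1)=x$. The map $x\mapsto g_x$ is therefore a bijection $\lambda\colon X\setminus\{0\}\to G$. Define multiplication by $xy=g_x(0,y)$ and set $0\cdot y=y\cdot 0=0$. Then $g_xg_y(0,1)=g_x(0,y)=xy$, so $\lambda$ is a group isomorphism onto $G$ for this multiplication, with identity $1$. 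Commutativity of $X\setminus\{0\}$ follows from Proposition~\ref{prop-abel}. For addition, the model suggests that the midpoint-type operation is given by the element $\bar e$-like "$2$". The cleaner route is to follow Belousov: use the element $s\in G$ with $s(0,1)$ playing the role of $1-p$. Concretely, for $p\in G$ let $p'\in G$ be determined by $p'(1,0)=p(0,1)$ (using \eqref{eq-11100}, $p'=\hat p$ up to the bar-correspondence). Then define
\[
x+y = g\bigl(\,g^{-1}(0,x)\,,\,\ldots\bigr),
\]
More precisely, we set $x+y=c\bigl(x',y'\bigr)$ with a fixed $c\in G$ acting as "$1/2$": choose $c$ with $c(0,1)=\hat c(1,0)$, i.e.\ the "midpoint" element, and put $x+y = c^{-1}(0,\,c(x,y))$. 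In the model, $c(x,y)=(x+y)/2$ when $c=1/2$, and scaling by $c^{-1}$ from $0$ recovers $x+y$. In characteristic $2$ this is unavailable, so we shall use the more robust Belousov definition instead: $x+y = k\bigl(h(0,x)\,,\,y\bigr)$, where $h,k$ are chosen so that $k(h(0,x),0)=x$. By Lemma~\ref{prop-1dist} this pair always exists ($k=g$, $h=\bar g$ for any $g\neq e$). The definition does not depend on the choice of $g$, and this independence is checked using \eqref{eq-001}.

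The key steps, in order, are as follows.

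\begin{enumerate}
\item Check that $(X,+)$ is an abelian group with neutral element $0$. Associativity and commutativity come from the medial identity \eqref{eq-001} applied to $g$, while inverses come from the quasigroup property of each $g\neq e$.
\item Prove distributivity $a(x+y)=ax+ay$. Multiplication by $a$ is $g_a(0,\cdot)$, which is a biequivariant map fixing $0$ by Theorem~\ref{th-criteria}, so it commutes with every operation $k(\cdot,\cdot)$ appearing in the definition of $+$.
\item Verify that $g(x,y)=(1-g)x+gy$, where $g$ is identified with $\lambda^{-1}(g)=g(0,1)$. Both sides are "affine" in the sense of left and right distributivity \eqref{eq-0005}. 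The value at $x=0$ is $g(0,y)=g\cdot y$ by definition. The value at $y=x$ is $x$, by Definition~\ref{def:semi-trans}.
\item Establish continuity. Multiplication and addition are compositions of $\alpha$ and of the inverse operations supplied by Corollary~\ref{cor-002}, so they are continuous provided the inverse operations are continuous. Inversion $x\mapsto x^{-1}$ corresponds under $\lambda$ to inversion in $G$, so continuity of $\lambda$ and $\lambda^{-1}$ is needed.
\end{enumerate}

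The main obstacle is the well-definedness and associativity of addition independent of characteristic, that is, showing that Belousov's construction from a single medial idempotent quasigroup $g$ yields an abelian group whose translations commute with the whole action. I would first try to prove that $T_a\colon x\mapsto x+a$ is an automorphism of every operation $p(\cdot,\cdot)$ using \eqref{eq-001}, and then deduce the group laws from uniqueness (Proposition~\ref{prop-1,2}). A secondary issue is topological: continuity of $\lambda^{-1}$, i.e.\ of $x\mapsto g_x$. The plan is to use that $G\to X$, $g\mapsto g(0,1)$, is a continuous bijection onto $X\setminus\{0\}$. I expect its openness to need to be either assumed implicitly or obtained from the compact-open topology via $H_2(X)$.
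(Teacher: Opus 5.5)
Your multiplication and the map $\lambda$ match the paper, but your addition, which is the core of the proof, is wrong.

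\textbf{The addition.} You define $x+y:=g\bigl(\bar g(0,x),y\bigr)$ with $g\ne e$. Lemma~\ref{prop-1dist} does give $x+0=x$. However, $0+y=g\bigl(\bar g(0,0),y\bigr)=g(0,y)$, and by Proposition~\ref{prop-1,2} this differs from $y$ for every $y\ne 0$. So $0$ is not a two-sided zero, and the operation is not commutative. In the model of Proposition~\ref{ex-pole}, put $p=g(0,1)$. Then $\bar g=(1-p)^{-1}$ and your operation is $x+py$, which also depends on $g$, contrary to your claim.

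No choice of $h,k$ in the form $k\bigl(h(0,x),y\bigr)$ can work. The requirement $0+y=y$ forces $k=e$, and then $k\bigl(h(0,x),0\bigr)=0\ne x$. What is missing is a second rescaling of $y$ about $0$. The paper uses
\[
x+y=s\bigl(\tilde s(x,0),\,s^{-1}(0,y)\bigr).
\]
Since $\tilde s(x,0)=\bar s(0,x)$, this is your formula with $y$ replaced by $s^{-1}(0,y)$. Your abandoned ``midpoint'' condition $c(0,1)=\hat c(1,0)$ also singles out nothing, because by \eqref{eq-11100} it holds for every $c$.

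\textbf{The identity that carries the proof.} With the corrected addition, everything rests on
\[
g(x+y,\,z+t)=g(x,z)+g(y,t)\qquad\text{for all } g\in G.
\]
It follows from the medial law \eqref{eq-001} by moving $g$ past $s$, then past $\tilde s$ and $s^{-1}$, using $g(0,0)=0$. From it:
\begin{enumerate}
\item associativity follows by applying the identity to $s^{-1}$ and then to $s$;
\item commutativity follows from the splitting $g(x,y)=g(x,0)+g(0,y)$ together with $g(x,y)=\hat g(y,x)$;
\item formula \eqref{eq-pole} follows from $x=g(x,x)=g(x,0)+g(0,x)$, which gives $g(x,y)=x-px+py$.
\end{enumerate}
Your Step~3 (``both sides are affine'') has content only once this identity is available; \eqref{eq-0005} alone says nothing about $+$. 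Your Step~2, via biequivariance of $g_a(0,\cdot)$, goes through once $+$ is corrected.

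\textbf{Continuity.} Your worry about continuity of $x\mapsto g_x$ is justified, and the paper does not settle it; it only asserts continuity ``because the binary action is continuous.'' For example, let $G=\mathbb{R}^{*}$ with the discrete topology act on $\mathbb{R}$ by \eqref{eq-pole}. Then $g\mapsto g(0,1)$ is not a homeomorphism. So on that point you are not missing anything the paper supplies.
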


\begin{proof}
Fix two arbitrary distinct points $x_0$ and $x_1$ of $X$. Denote $x_0 = 0$ and $x_1 = 1$. Consider the action $\alpha_0$ of the group $G$ on $X$ defined by
\[
\alpha_0(g,x) = g(0,x).
\]
By the semitransitivity of the binary action, the space $X$ has two orbits under $\alpha_0$: $\{0\}$ and $X\setminus\{0\}$. By Proposition~\ref{prop-1,2}, the map
\[
i: G \to X\setminus\{0\}, \qquad i(g) = g(0,1),
\]
is a bijection, and its inverse
\[
i^{-1}: X\setminus\{0\} \to G
\]
associates to each element $x \in X\setminus\{0\}$ the element $g_x = i^{-1}(x)$ of $G$ such that
\[
g_x(0,1) = x.
\]

We define a multiplication on $X$ as follows:
\begin{equation}\label{umnojenie}
\begin{cases}
xy = i(g_x g_y) = (g_x g_y)(0,1) = g_x(0,y), & x,y\in X,\ x\neq 0,\ y\neq 0,\\[2mm]
x0 = 0x = 0, & x\in X.
\end{cases}
\end{equation}

We now prove that with respect to the multiplication~\eqref{umnojenie}, the set $X\setminus\{0\}$ is a commutative group with identity $1$.

\emph{Associativity.}
For all $x,y,z \in X\setminus\{0\}$ we have
\begin{align*}
(xy)z
&= i\bigl((g_x g_y)g_z\bigr)
 = ((g_x g_y)g_z)(0,1) \\
&= (g_x(g_y g_z))(0,1)
 = i\bigl(g_x(g_y g_z)\bigr)
 = x(yz).
\end{align*}

\emph{Existence of identity.}
Since $e(0,1) = 1$, for any $x \in X\setminus\{0\}$ we have
\[
x \cdot 1 = i(g_x e) = g_x(0,1) = x, 
\qquad
1 \cdot x = i(e g_x) = g_x(0,1) = x.
\]

\emph{Existence of inverses.}
For any $x \in X\setminus\{0\}$, denote $x^{-1} = i(g_x^{-1}) = g_x^{-1}(0,1)$. Then
\[
x \cdot x^{-1} = i(g_x g_x^{-1}) = i(e) = 1,
\qquad
x^{-1} \cdot x = i(g_x^{-1} g_x) = i(e) = 1.
\]

\emph{Commutativity.}
By Proposition~\ref{prop-abel}, we have $g_x g_y = g_y g_x$ for all $x,y \in X\setminus\{0\}$. Therefore,
\[
xy = i(g_x g_y) = i(g_y g_x) = yx.
\]

\emph{Continuity.}
Multiplication~\eqref{umnojenie} and the map $x \mapsto x^{-1}$ are continuous because the binary action of the group $G$ on $X$ is continuous.

Now fix an arbitrary element $s \in G$ and define an addition operation $+$ on $X$ by the formula
\begin{equation}\label{eq-sum}
x + y = s\bigl(\tilde{s}(x,0),\, s^{-1}(0,y)\bigr),
\end{equation}
where $x,y \in X$ and $\tilde{s}$ is the left inverse of $s$ (see Corollary~\ref{cor-002}).

We first note that for any $g \in G$ the identity
\begin{equation}\label{eq-004}
g(x+y,\, z+t) = g(x,z) + g(y,t)
\end{equation}
holds for all $x,y,z,t \in X$. Indeed, by~\eqref{eq-001} we have
\begin{align*}
g(x+y,\, z+t)
&= g\Bigl(
    s\bigl(\tilde{s}(x,0), s^{-1}(0,y)\bigr),\,
    s\bigl(\tilde{s}(z,0), s^{-1}(0,t)\bigr)
  \Bigr) \\
&= s\Bigl(
    g\bigl(\tilde{s}(x,0),\tilde{s}(z,0)\bigr),\,
    g\bigl(s^{-1}(0,y), s^{-1}(0,t)\bigr)
  \Bigr) \\
&= s\Bigl(
    \tilde{s}\bigl(g(x,z),0\bigr),\,
    s^{-1}\bigl(0, g(y,t)\bigr)
  \Bigr) \\
&= g(x,z) + g(y,t).
\end{align*}

We now prove that $X$ is an abelian group under the addition~\eqref{eq-sum}.

\emph{Associativity.}
This follows from formulas~\eqref{eq-sum} and~\eqref{eq-004}:
\begin{align*}
x+(y+z)
&= s\bigl(\tilde{s}(x,0),\, s^{-1}(0,y+z)\bigr) \\
&= s\bigl(
    \tilde{s}(x,0)+0,\,
    s^{-1}(0,y)+s^{-1}(0,z)
  \bigr) \\
&= s\bigl(\tilde{s}(x,0), s^{-1}(0,y)\bigr)
 + s\bigl(0, s^{-1}(0,z)\bigr) \\
&= (x+y)+z .
\end{align*}

\emph{Existence of zero.}
Note that $0$ is the zero element:
\[
x+0 = s\bigl(\tilde{s}(x,0), s^{-1}(0,0)\bigr) = s\bigl(\tilde{s}(x,0),0\bigr) = x,
\]
\[
0+y = s\bigl(\tilde{s}(0,0), s^{-1}(0,y)\bigr) = s\bigl(0, s^{-1}(0,y)\bigr) = y.
\]

\emph{Existence of additive inverses.}
Let $x$ be an arbitrary element of $X$. We show that the element
\[
- x = s\bigl(0,\, s^{-1}(\tilde{s}(x,0),0)\bigr)
\]
is the additive inverse of $x$. Indeed,
\begin{align*}
x+(-x)
&= s\Bigl(
    \tilde{s}(x,0),\,
    s^{-1}\bigl(0,s(0,s^{-1}(\tilde{s}(x,0),0))\bigr)
  \Bigr) \\
&= s\bigl(
    \tilde{s}(x,0),\,
    s^{-1}(\tilde{s}(x,0),0)
  \bigr) \\
&= e\bigl(\tilde{s}(x,0),0\bigr) = 0.
\end{align*}

\emph{Commutativity.}
Note that the binary action of the group $G$ on $X$ and the addition operation~\eqref{eq-sum} are related by the formula
\begin{equation}\label{eq-005}
g(x, y) = g(x,0) + g(0,y)
\end{equation}
for all $g \in G$ and $x,y \in X$. Indeed, by~\eqref{eq-004}, we have
\[
g(x, y) = g(x+0, 0+y) = g(x,0) + g(0,y).
\]

Now, using~\eqref{eq-sum},~\eqref{eq-005} and~\eqref{eq-11100} (see Corollary~\ref{cor-001}), we obtain
\begin{align*}
x+y
&= s\bigl(\tilde{s}(x,0),\,s^{-1}(0,y)\bigr) \\
&= \hat{s}\bigl(s^{-1}(0,y),\,\tilde{s}(x,0)\bigr) \\
&= \hat{s}\bigl(s^{-1}(0,y),0\bigr)
 + \hat{s}\bigl(0,\tilde{s}(x,0)\bigr) \\
&= s\bigl(0,s^{-1}(0,y)\bigr)
 + s\bigl(\tilde{s}(x,0),0\bigr) \\
&= y+x .
\end{align*}

\emph{Continuity.}
The addition operation~\eqref{eq-sum} and the map $x \mapsto -x$ are continuous because the binary action of $G$ on $X$ is continuous.

We now prove the distributivity property. By~\eqref{umnojenie} and~\eqref{eq-004}, we have
\[
x(y+z) = g_x(0, y+z) = g_x(0+0, y+z) = g_x(0,y) + g_x(0,z) = xy + xz.
\]

Thus, $X$ is a topological field with respect to the operations defined by formulas~\eqref{umnojenie} and~\eqref{eq-sum}.

It remains to show that the binary action of the group $G$ on $X$ coincides with the natural action~\eqref{eq-pole} of the multiplicative group of the field on $X$.
From identity~\eqref{eq-005} it follows that
\[
x = g(x,x) = g(x,0) + g(0,x),
\]
hence
\[
g(x,0) = x - g(0,x).
\]
Therefore,
\[
g(x,y) = g(x,0) + g(0,y) = x - g(0,x) + g(0,y).
\]
Denote $p = i(g) = g(0,1) \in X\setminus\{0\}$. Then $g(0,x) = px$ and $g(0,y) = py$. Consequently,
\[
g(x,y) = p(x,y) = x - px + py = (1-p)x + py.
\]

The theorem is proved.
\end{proof}

The topological field constructed in the proof of the previous theorem depends on three parameters \(x_0, x_1, s\), where \(x_0, x_1 \in X\), \(x_0 \neq x_1\), and \(s \in G\). Denote this field by
\[
F = F(X, x_0, x_1, s).
\]
The following statement holds.

\begin{proposition}
Let \(x_0 \neq x_1\) and \(x_0' \neq x_1'\) be arbitrary pairs of distinct points in a semitransitive distributive binary \(G\)-space \(X\), and let \(s, s'\) be arbitrary elements of the group \(G\). Then the topological fields \(F = (X, x_0, x_1, s)\) and \(F' = (X, x_0', x_1', s')\) are isomorphic.
\end{proposition}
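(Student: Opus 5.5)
The plan is to show that all the fields $F(X,x_0,x_1,s)$ are determined, up to isomorphism, by the binary action alone. The argument has three steps: (a) the parameter $s$ does not affect the field at all; (b) changing the pair $(x_0,x_1)$ to $(x_0',x_1')$ is realized by a biequivariant homeomorphism $f$ of $X$; (c) any biequivariant bijection $f$ with $f(x_0)=x_0'$, $f(x_1)=x_1'$ is a field isomorphism.

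\emph{Step (a): independence of $s$.} Fix $x_0=0$, $x_1=1$. The multiplication~\eqref{umnojenie} does not involve $s$. By Theorem~\ref{th-main}, in $F(X,0,1,s)$ every $g$ acts as $g(x,y)=(1-p)x+py$ with $p=g(0,1)$. Hence $g(x,0)=(1-p)x$ and $g(0,y)=py$, and by~\eqref{eq-005} we get $g(x,y)=g(x,0)+g(0,y)$.

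Now let $|X|>2$, and choose $g\in G$ with $g\neq e$, so $p\neq 1$ as well as $p\neq 0$. Take arbitrary $a,b\in X$ and define $x,y$ by $g(x,0)=a$ and $g(0,y)=b$, i.e. $x=a\cdot g(1,0)^{-1}$ and $y=b\cdot p^{-1}$. These use only the action and the multiplication. Then
\[
a+b=g\bigl(a\cdot g(1,0)^{-1},\, b\cdot g(0,1)^{-1}\bigr),
\]
an expression free of $s$. So $+_s=+_{s'}$. If $|X|=2$, then $X$ is the field with two elements, which is unique.

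\emph{Steps (b) and (c).} By Proposition~\ref{prop-1,2} the maps $i(g)=g(x_0,x_1)$ and $i'(g)=g(x_0',x_1')$ are bijections $G\to X\setminus\{x_0\}$ and $G\to X\setminus\{x_0'\}$. Define $f(x_0)=x_0'$ and $f=i'\circ i^{-1}$ on $X\setminus\{x_0\}$.

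Biequivariance of $f$ is a direct check using the coordinate formula: in $F$, $x=g_x(x_0,x_1)$ with $g_x$ corresponding to $x$ itself. Then
\[
f(h(x,y))=h(f(x),f(y))
\]
reduces, via Theorem~\ref{th-main} applied to both $F$ and $F'$, to the identity $(1-q)p_x+qp_y=p_{(1-q)x+qy}$ for the multiplicative labels. This holds because $f$ carries $g_x$ to the same group element.

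Given biequivariance, $f$ preserves multiplication: $f(xy)=f(g_x(x_0,y))=g_x(x_0',f(y))$, and $g_x=g'_{f(x)}$ since $f(x)=g_x(x_0',x_1')$. It also preserves $0$ and $1$. Addition is then preserved because, by step (a), it is given by the displayed formula in terms of the action, the multiplication and $1$, all of which $f$ respects.

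\emph{Main obstacle.} I expect the hard part to be the topology: continuity of $f$ and $f^{-1}$, in particular continuity of $i^{-1}$ and behaviour at $x_0$. The first thing I will try is to express $f$ field-theoretically, which should give continuity directly. In $F$, $f(x)=(1-x)x_0'+x\,x_1'$ is affine, where $x_0',x_1'$ are read as elements of $F$ and the operations are those of $F$. Continuity of $f$ and $f^{-1}$ then follows from continuity of the operations of $F$ (Theorem~\ref{th-main}). Checking biequivariance is then also easy in this form, since affine maps commute with $(a,b)\mapsto(1-p)a+pb$.
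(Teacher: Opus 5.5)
Your proof is correct, and it builds the same isomorphism as the paper. Your $f=i'\circ i^{-1}$, extended by $x_0\mapsto x_0'$, is the paper's $\varphi(p)=p'$, and your formula $f(x)=(1-x)x_0'+x\,x_1'$ is its $\varphi(p)=p\cdot(1'-0')+0'$.

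\textbf{Where the routes differ.} The difference is in how the homomorphism property is verified.
\begin{itemize}
\item The paper stays in the coordinates of $F$. It substitutes special values into $(1-p)x+py=(1'\ominus p')\circ x\oplus p'\circ y$ to get the transport formulas $x\oplus y=x+y-0'$ and $\varphi(x)\circ y=x(y-0')+0'$. It then checks additivity and multiplicativity by direct calculation.
\item You argue structurally. The map $f$ is a biequivariant homeomorphism sending $(x_0,x_1)$ to $(x_0',x_1')$. Both field operations are definable from the action and the two marked points: multiplication by \eqref{umnojenie}, and addition by $a+b=g\bigl(a\cdot g(1,0)^{-1},\,b\cdot g(0,1)^{-1}\bigr)$ for a fixed $g\neq e$. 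Hence $f$ preserves them.
\end{itemize}
Both routes rest on the identity \eqref{eq-005}. The paper's representation $x=(1'\ominus p')\circ z$, $y=p'\circ t$ is the same device as your formula for $a+b$.

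\textbf{What each route gives.} Your version yields somewhat more:
\begin{itemize}
\item The field depends only on $(x_0,x_1)$ and not on $s$; for fixed marked points the fields are literally equal.
\item The case $|X|=2$ is covered. The paper's choice of a fixed $p'\neq 0',1'$ silently excludes it.
\item The argument shows that any biequivariant homeomorphism matching the marked points is a field isomorphism. This is the ingredient behind the Remark on compatibility with isomorphisms.
\end{itemize}
The paper's version, in exchange, gives explicit formulas for how the operations of $F'$ look inside $F$.

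\textbf{Two small points.}
\begin{itemize}
\item Your first justification of biequivariance, the ``identity for the multiplicative labels,'' is not an argument as written. The real proof is the affine one in your last paragraph. There $f$ is affine in $F$ by Theorem~\ref{th-main}, and affine maps commute with $(a,b)\mapsto(1-q)a+qb$ because $(1-q)+q=1$. That argument should be the main text.
\item The claim $+_s=+_{s'}$ tacitly requires $s,s'\neq e$. For $|X|>2$ the left inverse $\tilde e$ of Corollary~\ref{cor-002} does not exist, so the construction \eqref{eq-sum} itself already needs $s\neq e$.
\end{itemize}
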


\begin{proof}
By Theorem~\ref{th-main}, the binary action of the group \(G\) on \(X\) is expressed in terms of the addition <<\(+\)>> and multiplication <<\(\cdot\)>> of the topological field \(F = (X, x_0, x_1, s)\) by the formula
\[
g(x,y) = (1-p)\cdot x + p\cdot y,
\]
where \(p = g(0,1)\), and \(x_0 = 0\) and \(x_1 = 1\) play the roles of the zero and identity of the field.

On the other hand, the same binary action of the group \(G\) on \(X\) is also expressed in terms of the addition <<\(\oplus\)>> and multiplication <<\(\circ\)>> of the topological field \(F' = (X, x_0', x_1', s')\) by the formula
\[
g(x,y) = (1' \ominus p')\circ x \oplus p'\circ y,
\]
where \(p' = g(0',1')\), and \(x_0' = 0'\) and \(x_1' = 1'\) are the zero and identity of the field \(F'\), respectively.

Thus, for any \(p \in X\) there exists a unique \(p' \in X\) such that the equality
\begin{equation}\label{varphi(p)}
(1-p)\cdot x + p\cdot y = (1' \ominus p')\circ x \oplus p'\circ y
\end{equation}
holds for all \(x, y \in X\).

Define a map \(\varphi : F \to F'\) by \(\varphi(p) = p'\), and prove that \(\varphi\) is an isomorphism of the fields \(F\) and \(F'\).

Substituting \(x = 0'\) and \(y = 1'\) into \eqref{varphi(p)}, we obtain
\[
p' = (1-p)\cdot 0' + p\cdot 1' = p\cdot (1'-0') + 0'.
\]
Thus, \(\varphi\) is given by the affine linear map
\begin{equation}\label{eq-varphi}
\varphi(p) = p\cdot (1'-0') + 0'.
\end{equation}
Consequently, \(\varphi\) is a homeomorphism of the space \(X\) that preserves the additive and multiplicative neutral elements of the fields \(F\) and \(F'\):
\[
\varphi(0) = 0' \quad \text{and} \quad \varphi(1) = 1'.
\]

We now prove that \(\varphi\) also preserves the algebraic structures of these fields.

Consider the following two identities, which follow directly from equality \eqref{varphi(p)} by substituting \(y = 0'\) and \(x = 0'\), respectively:
\begin{equation}\label{eq-y=0'}
(1-p)\cdot x + p\cdot 0' = (1' \ominus p')\circ x,
\end{equation}
\begin{equation}\label{eq-x=0'}
(1-p)\cdot 0' + p\cdot y = p'\circ y.
\end{equation}

Let \(x, y \in X\) be arbitrary elements. They can be represented as
\begin{equation}\label{eq-xy}
x = (1' \ominus p')\circ z, \qquad y = p' \circ t,
\end{equation}
where \(z, t \in F'\) and \(p' \in F'\) is a fixed element different from \(0'\) and \(1'\).

Using equalities \eqref{varphi(p)}, \eqref{eq-y=0'}, \eqref{eq-x=0'} and representation \eqref{eq-xy}, we obtain
\begin{align*}
x+y
&= \bigl((1' \ominus p')\circ z\bigr) + (p'\circ t) \\
&= (1-p)\cdot z + p\cdot 0'
 + (1-p)\cdot 0' + p\cdot t \\
&= (1-p)\cdot z + p\cdot t + 0' \\
&= \bigl((1' \ominus p')\circ z \oplus p'\circ t\bigr) + 0' \\
&= (x\oplus y)+0' .
\end{align*}
Hence we obtain the formula
\begin{equation}\label{eq-xoplusy}
x \oplus y = x + y - 0'.
\end{equation}

Using equalities \eqref{eq-varphi} and \eqref{eq-xoplusy}, it is easy to show that \(\varphi\) is a homomorphism of the additive groups \((F, +)\) and \((F', \oplus)\):
\begin{align*}
\varphi(x)\oplus\varphi(y)
&= \varphi(x)+\varphi(y)-0' \\
&= \bigl(x(1'-0')+0'\bigr)
 + \bigl(y(1'-0')+0'\bigr) - 0' \\
&= (x+y)\cdot(1'-0')+0'
 = \varphi(x+y).
\end{align*}

Next, rewrite equality \eqref{eq-x=0'} as
\begin{equation}\label{eq-20}
\varphi(x) \circ y = x \cdot (y - 0') + 0',
\end{equation}
where \(x, y \in F\). Now from equalities \eqref{eq-varphi} and \eqref{eq-20} we directly obtain
\begin{align*}
\varphi(x)\circ\varphi(y)
&= x\cdot\bigl(\varphi(y)-0'\bigr)+0' \\
&= x\cdot\bigl(y\cdot(1'-0')+0'-0'\bigr)+0' \\
&= x\cdot y\cdot(1'-0')+0'
 = \varphi(x\cdot y).
\end{align*}
Thus, \(\varphi\) is a homomorphism of the multiplicative groups \((F, \cdot)\) and \((F', \circ)\).

Therefore, \(\varphi\) is an isomorphism of the topological fields \(F\) and \(F'\).

The proposition is proved.
\end{proof}

From Theorem~\ref{th-main} and Proposition~\ref{ex-pole} the following important theorem follows directly.

\begin{theorem}[Duality theorem]\label{th-classification}
There exists a one-to-one correspondence between the following objects:

{\rm 1)} semitransitive distributive binary \(G\)-spaces;

{\rm 2)} topological fields whose multiplicative group is isomorphic to \(G\).
\end{theorem}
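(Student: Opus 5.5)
The plan is to exhibit two maps between the two classes and check that they are mutually inverse up to the natural notions of isomorphism: biequimorphism of binary $G$-spaces on one side, topological field isomorphism on the other.

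\emph{From fields to spaces.} Start with a topological field $F$ together with an isomorphism of its multiplicative group $F^{*}$ onto $G$. Proposition~\ref{ex-pole} turns $F$ into a semitransitive distributive binary $G$-space via $p(a,b)=(1-p)a+pb$. Isomorphic fields give biequimorphic spaces: an isomorphism $\sigma:F\to F'$ is a homeomorphism, and it satisfies
\[
\sigma\bigl((1-p)a+pb\bigr)=(1-\sigma(p))\sigma(a)+\sigma(p)\sigma(b),
\]
so it is biequivariant once the identifications with $G$ are matched through $\sigma$. This direction is routine.

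\emph{From spaces to fields.} Start with a semitransitive distributive binary $G$-space $(G,X,\alpha)$. Theorem~\ref{th-main} puts a topological field structure $F(X,x_0,x_1,s)$ on $X$. Its multiplicative group is identified with $G$ through $g\mapsto g(0,1)$, and $\alpha$ is recovered by formula~\eqref{eq-pole}. The construction depends on the choice of $x_0,x_1,s$, but by the preceding proposition all such fields are isomorphic, so the space determines a well-defined isomorphism class of fields. If $f:X\to Y$ is a biequimorphism, transport the choices by setting $y_0=f(x_0)$ and $y_1=f(x_1)$ and keeping the same $s$. Then $f$ carries the operations~\eqref{umnojenie} and~\eqref{eq-sum} to the corresponding operations on $Y$, because both are built only from the action and biequivariance preserves them. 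Hence $f$ is a field isomorphism.

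\emph{The two maps are mutually inverse.}
\begin{itemize}
\item Space to field to space: Theorem~\ref{th-main} says exactly that the action on $X$ is given by formula~\eqref{eq-pole} in the constructed field, so we recover $\alpha$ itself.
\item Field to space to field: choose $x_0=0$ and $x_1=1$ of $F$. Then $g(0,y)=py$ recovers the original multiplication. For addition, the action gives
\[
g(x,0)+g(0,y)=(1-p)x+py=g(x,y),
\]
which is identity~\eqref{eq-005}. I expect this to show that formula~\eqref{eq-sum} reproduces the original $+$. If necessary, check this directly from $x+y=s(\tilde s(x,0),s^{-1}(0,y))$ using $\tilde s$ explicitly; otherwise appeal to the preceding proposition, which says the reconstructed field is isomorphic to $F$ in any case.
\end{itemize}

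The main obstacle is bookkeeping the identifications with $G$: a field whose multiplicative group is ``isomorphic to $G$'' must come with a chosen isomorphism so that it defines an action of $G$ itself. I will either fix these identifications or take objects up to the appropriate equivalence on each side, after which the correspondence is bijective.
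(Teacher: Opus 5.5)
This is the paper's own argument (Proposition~\ref{ex-pole} in one direction, Theorem~\ref{th-main} with the independence-of-choices proposition in the other), written out in detail, and it is correct. The addition round trip closes via \eqref{eq-005} as you expect: for $g$ with $p=g(0,1)\neq 0,1$ the maps $x\mapsto(1-p)x$ and $y\mapsto py$ are onto, so the two sums coincide (and $|F|=2$ is trivial). Your fallback does not work as written, because the preceding proposition only compares two constructed fields $F(X,x_0,x_1,s)$ and $F(X,x_0',x_1',s')$, not the original $F$.

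Your closing caveat is essential rather than bookkeeping, so commit to fixed identifications $G\cong F^{*}$ (or to biequimorphisms twisted by automorphisms of $G$). With plain isomorphism classes on both sides, the space-to-field map is not injective. For example, for $G=\mathbb{Z}/6$ the two identifications $G\cong\mathbb{F}_7^{*}$ yield non-biequimorphic $G$-spaces, because inversion on $\mathbb{F}_7^{*}$ is not induced by a field automorphism. The paper's one-line derivation passes over this point.
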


\begin{remark}
The correspondence established in the last theorem is compatible with isomorphisms of the corresponding objects. Thus, Theorem~\ref{th-classification} establishes an equivalence between the category of semitransitive distributive binary \(G\)-spaces and the category of topological fields whose multiplicative group is isomorphic to \(G\).
\end{remark}

Theorem~\ref{th-classification}, like any duality theorem, opens the way to a wide range of applications. One of them is related to the classification of semitransitive distributive binary actions of finite groups. In particular, from Theorem~\ref{th-classification}, taking into account the structure of finite fields, the following statement follows directly.

\begin{corollary}\label{cor-pn}
Let \(G\) be a finite group, and let \(X\) be a semitransitive distributive binary \(G\)-space. Then \(|X| = p^n\), where \(p\) is a prime number and \(n\) is a natural number.
\end{corollary}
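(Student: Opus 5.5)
By Theorem~\ref{th-main}, $X$ carries a topological field structure whose multiplicative group $X\setminus\{0\}$ is isomorphic to $G$. The plan is to use this structure directly.

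First I will pin down the cardinality of $X$. The map $i\colon G\to X\setminus\{0\}$, $i(g)=g(0,1)$, is a bijection by Proposition~\ref{prop-1,2} and semitransitivity. Hence
\[
|X| = |G|+1 ,
\]
which is finite because $G$ is finite. So $X$ is a finite field.

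Next I will use the standard fact that a finite field has prime-power order. The prime subfield of $X$ is finite, so the characteristic of $X$ cannot be $0$. The characteristic is then a prime $p$, since $\mathbb{Z}/m$ has zero divisors for composite $m$. Thus the prime subfield is $\mathbb{F}_p$, and $X$ is a finite-dimensional vector space over it, of some dimension $n$. This gives
\[
|X| = p^n .
\]

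Finally I need $n\ge 1$, i.e.\ $n$ is a natural number. This holds because $X$ contains the two distinct points $0$ and $1$; this is also implicit in Theorem~\ref{th-main}, which requires at least two points.

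I see no real obstacle, since everything reduces to Theorem~\ref{th-main}. The one point to check is that the space is nondegenerate. If $X$ were a single point, semitransitivity would hold vacuously, but effectivity of the action would force $G$ to be trivial. Theorem~\ref{th-main} itself fixes two distinct points $x_0\neq x_1$, so the statement implicitly assumes $|X|\ge 2$, and I will state this explicitly.
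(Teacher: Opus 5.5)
Your proof is correct and follows essentially the same route as the paper: use Theorem~\ref{th-main} (the substantive half of Theorem~\ref{th-classification}) to make $X$ a field with multiplicative group $G$, so that $|X|=|G|+1$ is finite, and then apply the standard structure theory of finite fields. Your remark on the degenerate case is apt: the one-point space (and even the empty space) with the trivial group is vacuously a semitransitive, distributive, effective binary space, so the hypothesis $|X|\ge 2$ is genuinely needed to get $n\ge 1$; the paper leaves it implicit by fixing $x_0\neq x_1$.
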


Not every topological group admits a semitransitive distributive binary action on a topological space. By Theorem~\ref{th-classification}, the problem of describing such groups is equivalent to the problem of classifying groups that can be multiplicative groups of topological fields.

The classical problem of describing multiplicative groups of fields is one of the important and difficult problems in algebra (see~\cite{Movsisyan, Dicker, fuchs}). In 1960, at the Third Colloquium on General Algebra in Moscow, A. I. Maltsev conjectured that the class of multiplicative groups of fields cannot be characterized elementarily, i.e., by first-order formulas~\cite{ershov}. This conjecture was proved in~\cite{kogalovski} (see also~\cite{sabbagh}). However, multiplicative groups of fields can be characterized by second-order formulas or hyperidentities~\cite{Movsisyan}.

From Theorem~\ref{th-classification} the following characterization of multiplicative groups of topological fields follows directly.

\begin{theorem}\label{th-multgr}
A group \(G\) is the multiplicative group of some topological field if and only if there exists a semitransitive and distributive binary representation of \(G\).
\end{theorem}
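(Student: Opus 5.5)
The statement has two directions, and I will derive each from results already established, essentially by unwinding the duality theorem (Theorem~\ref{th-classification}).

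\emph{Necessity.} Suppose \(G\) is the multiplicative group of a topological field \(F\), i.e.\ \(G=F\setminus\{0\}\) with the subspace topology. Proposition~\ref{ex-pole} provides the binary action \(p(a,b)=(1-p)a+pb\) of \(G\) on \(F\), and shows it is distributive and semitransitive. This action is continuous as a map \(G\times F^2\to F\), since addition and multiplication in \(F\) are continuous. The only extra point is that the associated representation \(\theta:G\to H_2(F)\) should be a genuine (effective) representation. If \(p(a,b)=b\) for all \(a,b\), then taking \(a=0\), \(b=1\) gives \(p=1\), so the kernel is trivial. Continuity of \(\theta\) into \(H_2(F)\) with the compact-open topology follows from continuity of the action map, via the standard exponential-law argument. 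Here \(F\) is a Tychonoff space, being a topological group, so any local compactness issue only concerns the direction we do not need.

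\emph{Sufficiency.} Suppose \(G\) has a semitransitive distributive binary representation, i.e.\ a semitransitive distributive binary \(G\)-space \((G,X,\alpha)\); by the standing convention of the paper it is effective. Theorem~\ref{th-main} then equips \(X\) with a topological field structure such that the action is given by \eqref{eq-pole}. It also identifies \(G\) with \(X\setminus\{0\}\) via the bijection \(i(g)=g(0,1)\), under which \(g_xg_y\) corresponds to \(xy\). So \(i\) is an algebraic group isomorphism onto the multiplicative group of the field, and \(i\) is continuous because \(\alpha\) is.

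\emph{Main obstacle.} The delicate step is that \(G\) should be the multiplicative group \emph{as a topological group}, i.e.\ that \(i^{-1}:X\setminus\{0\}\to G\) is continuous. The construction in Theorem~\ref{th-main} does not give this automatically. My first attempt is to transport the topology: give the field \(X\) the structure in which \(X\setminus\{0\}\) carries the topology of \(G\) via \(i\). Alternatively, if the statement is read purely algebraically ("group" meaning abstract group), the algebraic isomorphism already suffices. In that case the proof reduces to citing Theorem~\ref{th-classification} directly, which is how I expect the author intends it. Failing that, I would add a hypothesis such as \(G\) compact or locally compact \(\sigma\)-compact, so that a continuous bijective homomorphism is open by the open mapping theorem.
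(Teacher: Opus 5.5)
Your route is the paper's. Its proof of this theorem is a one-line appeal to the duality theorem, i.e.\ to Proposition~\ref{ex-pole} for necessity and Theorem~\ref{th-main} for sufficiency. Your necessity argument is correct; the half of the exponential law you use holds for arbitrary spaces, so the Tychonoff remark is unnecessary.

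The obstacle you isolate is genuine, and the paper does not address it. Theorem~\ref{th-main} only produces a continuous algebraic isomorphism $i\colon G\to X\setminus\{0\}$. The same gap also sits inside Theorem~\ref{th-main}: joint continuity of $xy=g_x(0,y)$ and of $x^{-1}=i(g_x^{-1})$ requires continuity of $x\mapsto g_x=i^{-1}(x)$. So you should not take from that theorem that the given topology of $X$ is a field topology, and indeed it need not be. Let $G=\mathbb{R}^{*}$ be discrete, acting by \eqref{eq-pole} on $\mathbb{R}$ with the initial topology of the maps $x\mapsto f(px)$, $p\in\mathbb{R}^{*}$, where $f\colon\mathbb{R}\to\mathbb{Q}$ is a nonzero $\mathbb{Q}$-linear map. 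Addition and all dilations are continuous, but multiplication is not continuous at $(0,0)$.

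Of your three remedies, only the algebraic reading survives, because the topological reading is false in the paper's framework, which imposes no separation axioms. Give $G=\mathbb{R}^{*}$ the group topology whose open sets are $\{p:|p|\in V\}$ with $V\subseteq(0,\infty)$ open; it is neither Hausdorff nor indiscrete. Let $X=\mathbb{R}$ be indiscrete and let $G$ act by \eqref{eq-pole}.
\begin{itemize}
\item The action is continuous (its codomain is indiscrete), effective, semitransitive and distributive.
\item The representation into $H_2(X)$ is continuous, because the compact-open topology on $C_2(X)$ is indiscrete.
\item In any topological field $L$ the closure of $\{0\}$ is an ideal, so $L$, and hence $L^{*}$, is Hausdorff or indiscrete.
\end{itemize}
Hence $G$ is not topologically the multiplicative group of any topological field.

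This settles your other two remedies. Transporting the topology of $G$ cannot work in general, since there may be no field topology inducing it. The open-mapping remedy fails as stated, for two reasons. First, a hypothesis on $G$ alone does not make $i$ open: take $G=\mathbb{F}_q^{*}$ discrete with $q\ge 3$ and $X=\mathbb{F}_q$ indiscrete. Second, the group form of the open mapping theorem presupposes that $X\setminus\{0\}$ is already a topological group, which is the very point at issue. What does work is a hypothesis on both sides, e.g.\ $G$ locally compact $\sigma$-compact and $X$ locally compact Hausdorff. Then the orbit map $i$ of the transitive action $(g,y)\mapsto g(0,y)$ on $X\setminus\{0\}$ is open by the Baire category argument.

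So the theorem holds only as an algebraic statement, which is all the paper's argument proves. In that reading your proof is complete, provided that in the sufficiency direction you give the field built in Theorem~\ref{th-main} the discrete topology rather than the topology of $X$.
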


%Bibliography

\end{document}